\providecommand\@dotsep{5}
\def\listtodoname{List of Todos}
\def\listoftodos{\@starttoc{tdo}\listtodoname}
\newcommand{\eps}{\varepsilon}
\newcommand{\R}{\mathbb{R}}
\newcommand{\RD}{{\mathbb{R}^2}}
\newcommand{\RT}{{\mathbb{R}^3}}
\renewcommand{\le}{\leslant}
\renewcommand{\ge}{\geslant}
\renewcommand{\a }{\alpha }
\renewcommand{\b }{\beta }
\renewcommand{\d }{\delta }
\newcommand{\g }{\gamma }
\renewcommand{\l }{\lambda}
\newcommand{\n }{\nabla }
\newcommand{\s }{\sigma }
\newcommand{\N}{\mathbb{N}}
\newcommand{\ird }{\int_{\RD}}
\def\bbm[#1]{\mbox{\boldmath $#1$}}
\newcommand{\beq }{\begin{equation}}
\newcommand{\eeq }{\end{equation}}
\renewcommand{\le}{\leqslant}
\renewcommand{\ge}{\geqslant}
\newtheorem{theorem}{Theorem}[section]
\newtheorem{lemma}[theorem]{Lemma}
\newtheorem{proposition}[theorem]{Proposition}
\newtheorem{remark}[theorem]{Remark}
\title[The 2-dimensional nonlinear Schr\"odinger-Maxwell system]{The 2-dimensional nonlinear Schr\"odinger-Maxwell system}
\author[A. Azzollini]{Antonio Azzollini}
\address{Dipartimento di Matematica, Informatica ed Economia, Universit\`a degli
	Studi della Basilicata,
	\newline\indent
	Via dell'Ateneo Lucano 10, I-85100
	Potenza, Italy}
\email{antonio.azzollini@unibas.it}
\author[M. Pimenta]{Marcos T.O. Pimenta}
\address{Departamento de Matem\'atica e Computa\c{c}\~ao, Universidade Estadual Paulista - Unesp, 
	\newline\indent
	CEP: 19060-900, Presidente Prudente - SP, Brazil}
\email{marcos.pimenta@unesp.br}
\subjclass[2010]{35J50, 35Q40}
\keywords{Schr\"odinger-Maxwell system; logarithmic convolution potential; elliptic systems}
\begin{document}
	
		\begin{abstract}
In this paper we carry on the study of a system recently introduced by the first author as the planar version of the well known electrostatic Schr\"odinger - Maxwell equations.

In the positive potential case, we exhibit situations where the existence of solutions depends on the strength of the coupling, being this one modulated by a parameter.

We also present some results in the case of a sign-changing potential.
		
	\end{abstract}

	\maketitle
	\section{Introduction}
		In this paper we wish to perform a deeper study of the following Schr\"odinger-Maxwell problem 
		\beq \label{eq:e01}\tag{$\mathcal{P}_0$}
		\left	\{
		\begin{array}{l}
			-\Delta u  +V(x) u- q \phi u +W'(u)=0\hbox{ in } \R^2,	\\
			\Delta \phi = u^2 \hbox{ in } \R^2,
		\end{array}
		\right.
		\eeq
	introduced in \cite{A}.\\
	The origin of this system is purely physical. Indeed, the system \eqref{eq:e01} is obtained by coupling the nonlinear Schr\"odinger equation with the Maxwell equations and it represents a model for describing the dynamics of a charged particle interacting with the electromagnetic field generated by itself. 
	
	Due to its important meaning, this system was widely studied in three-dimensions by many authors, since it was introduced, in its linear version, in the pioneering paper by Benci and Fortunato \cite{BF} as an eigenvalue problem in a bounded domain. In the last 25 years the literature has been enriched by a lot of  papers dealing with several situations. In particular, we recall fundamental contributions from \cite{ARu,C,DM,K,Ruiz} in looking for existence and multiplicity results, specifically concerning radial solutions, for the problem
		\beq \label{eq:sm}
\left	\{
\begin{array}{l}
	-\Delta u  + u- q \phi u -|u|^{p-2}u=0\hbox{ in } \R^3,	\\
	\Delta \phi = u^2 \hbox{ in } \R^3.
\end{array}
\right.
\eeq

Abandoning the radial function constraint, in \cite{AP} it was proved the existence of a ground state solution for \eqref{eq:sm} when $p\in (3,6)$, and for a nonautonomous version of  \eqref{eq:sm} obtained introducing a Rabinowitz type function $V(x)$ as a coefficient of  the linear term $u$ when $p\in (4,6)$ (the range $p\in (3,4]$ was later successfully treated in \cite{ZZ}). Ranges of nonexistence depending on values of $p$ and $q$ are provided in \cite{AP,DM2,Ruiz} . We also recall \cite{ADP} where system \eqref{eq:sm} was considered in presence of a general nonlinearity and \cite{Ruiz2,IR}  where it was studied an equation which is equivalent to a zero mass version of \eqref{eq:sm}.  

In spite of the broad interest reserved to problem \eqref{eq:sm}, up to our knowledge the literature on \eqref{eq:e01} is definitely less rich. 	
	
	  Motivated by physical considerations on positiveness of energy density related with the rest mass (see \cite{M} for more details), the problem \eqref{eq:e01} was introduced in \cite{A} for $V(x)=1+|x|^\a$  and $W(s)\ge 0$.  In particular, in \cite{A} it was studied the problem
	  
	  		\beq \label{eq:e1}\tag{$\mathcal{P}_1$}
	  \left	\{
	  \begin{array}{l}
	  	-\Delta u  + (1+|x|^\a) u- q \phi u +|u|^{p-2}u=0\hbox{ in } \R^2,	\\
	  	\Delta \phi = u^2 \hbox{ in } \R^2,
	  \end{array}
	  \right.
	  \eeq
	  and proved the following result
		\begin{theorem}\label{A}
			Assume $2<p$. If it holds one or the other of these situations
				\begin{itemize}
					\item $p>4$ and $\a \in \left(0,\tilde p \right]$ with  $\tilde p=\frac{2p-4}{p-4}$,
					\item $2<p\le 4$ and  $\a >0$,
				\end{itemize}   
			then for any $q>0$ \eqref{eq:e1} possesses infinitely many nontrivial  solutions.
		\end{theorem}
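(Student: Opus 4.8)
The plan is to recast \eqref{eq:e1} as a single variational problem and then to produce infinitely many critical points by an equivariant min-max argument. First I would solve the second equation by the two-dimensional Newtonian (logarithmic) potential, writing
\[
\phi_u(x)=\frac{1}{2\pi}\ird\log|x-y|\,u^2(y)\,dy,
\]
so that $\Delta\phi_u=u^2$, and substitute it into the first equation. Critical points of the even reduced functional
\[
I_q(u)=\frac12\ird\big(|\n u|^2+(1+|x|^\alpha)u^2\big)+\frac1p\ird|u|^p-\frac{q}{4}\ird\phi_u u^2
\]
on the weighted space $X=\{u\in\H:\ird|x|^\alpha u^2<\infty\}$, with $\|u\|^2=\ird(|\n u|^2+(1+|x|^\alpha)u^2)$, then correspond to solutions of \eqref{eq:e1}. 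Since $I_q$ is even and $I_q(0)=0$, the natural device for infinitely many solutions is the Fountain Theorem, applied to the basis of $X$ given by the eigenfunctions $e_j$ of $-\Delta+(1+|x|^\alpha)$, whose eigenvalues satisfy $\lambda_k\to+\infty$ because the potential is confining.

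Two analytic inputs drive everything. The first is the control of the sign-indefinite logarithmic term: splitting $\{|x-y|\le 1\}$ from $\{|x-y|>1\}$, the near-diagonal part is estimated by $\|u\|_{8/3}^4$ through a Hardy--Littlewood--Sobolev type bound, while on the far region one uses $\log|x-y|\le C(1+\log(1+|x|)+\log(1+|y|))$ together with $\log(1+|x|)\le\epsilon|x|^\alpha+C_\epsilon$ to obtain
\[
\Big|\ird\phi_u u^2\Big|\le C\|u\|_{8/3}^4+C\|u\|_2^2\Big(\epsilon\ird|x|^\alpha u^2+C_\epsilon\|u\|_2^2\Big).
\]
Here the strict positivity of $\alpha$ is exactly what allows the coercive weight to absorb the logarithmic growth, making $I_q$ well defined and $C^1$ on $X$. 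The second input is that the divergent weight $1+|x|^\alpha$ renders the embedding $X\hookrightarrow L^s(\R^2)$ compact for every $s\in[2,\infty)$; this gives both the weak continuity of $u\mapsto\ird\phi_u u^2$ and the compactness needed to upgrade bounded Palais--Smale (or Cerami) sequences to convergent ones.

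With these in hand I would verify the two geometric conditions of the Fountain Theorem. On the high modes $Z_k=\overline{\bigoplus_{j\ge k}X_j}$ the indefinite quartic term is tamed by the estimate above combined with $\|u\|_2^2\le\lambda_k^{-1}\|u\|^2$, giving $I_q(u)\ge\frac12\|u\|^2-C\lambda_k^{-1}\|u\|^4$; maximizing on a sphere of radius $r_k\sim\lambda_k^{1/2}$ forces $\inf_{u\in Z_k,\,\|u\|=r_k}I_q(u)\to+\infty$. The opposite condition, that $I_q\le 0$ on a large sphere of each finite-dimensional $Y_k=\bigoplus_{j\le k}X_j$, is where the exponent $\tilde p$ is born: testing with the two-parameter family $w_{s,t}=s\,w(\cdot/t)$ produces a leading coupling term $\sim-s^4t^4\log t$, to be weighed against $s^pt^2$ from the $L^p$ term and $s^2t^{2+\alpha}$ from the weight. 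Writing $s=t^\sigma$, a negative leading order requires $\frac{\alpha-2}{2}<\sigma<\frac{2}{p-4}$, and this interval is nonempty precisely when $\alpha<\frac{2p-4}{p-4}=\tilde p$, the endpoint being recovered through the $\log t$ factor. When $2<p\le 4$ the upper bound $\sigma<\frac{2}{p-4}$ simply disappears, so every large $\sigma$ works and no restriction on $\alpha$ is needed---exactly the dichotomy in the statement.

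The step I expect to be the main obstacle is the boundedness of Cerami sequences, since the coupling term has no definite sign. For $2<p\le 4$ this is painless: the combination $I_q(u_n)-\frac14 I_q'(u_n)[u_n]=\frac14\|u_n\|^2+\big(\frac1p-\frac14\big)\|u_n\|_p^p$ has both coefficients nonnegative and bounds $\|u_n\|$ at once. For $p>4$ the same combination leaves the superquartic term on the wrong side, and a Cerami sequence with $I_q'(u_n)[u_n]\to0$ only yields $\frac14\|u_n\|^2+\big(\frac1p-\frac14\big)\|u_n\|_p^p\to c$, which does not close by itself. Overcoming this is the heart of the matter: one must show that the positive far-field part of $\ird\phi_{u_n}u_n^2$ cannot outgrow the coercive quantity $\ird|x|^\alpha u_n^2$, and it is here that the bound $\alpha\le\tilde p$ has to be exploited a second time, most plausibly through a rescaling/contradiction argument in which the weighted compact embedding excludes an unbounded sequence. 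Once boundedness, and hence the relevant compactness condition at every level, is secured, the Fountain Theorem yields an unbounded sequence of critical values and therefore infinitely many nontrivial solutions of \eqref{eq:e1} for every $q>0$.
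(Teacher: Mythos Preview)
This theorem is not proved in the present paper: it is quoted from \cite{A}, and the authors only comment on its proof to motivate the new results. So there is no proof here to compare against directly; I will assess your proposal on its own terms and against what the paper reveals about the argument in \cite{A}.

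Your variational setup, the log-splitting estimate, the compact embeddings, and the symmetric min-max scheme are all sound and in line with how \cite{A} and the present paper treat the problem. The scaling analysis explaining why the threshold $\tilde p=\frac{2p-4}{p-4}$ appears in the geometry is also correct in spirit.

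There is, however, a genuine gap. You explicitly flag the boundedness of Cerami sequences for $p>4$ as ``the main obstacle'' and then do not close it: the suggestion that ``the positive far-field part of $\int\phi_{u_n}u_n^2$ cannot outgrow $\int|x|^\alpha u_n^2$'' via ``a rescaling/contradiction argument'' is not a proof, and the combination $I_q(u_n)-\tfrac14\langle I_q'(u_n),u_n\rangle$ indeed fails by itself when $p>4$. The paper makes clear how this is actually handled in \cite{A}: one does not rely on the bare Palais--Smale information but constructs, via a deformation/monotonicity device, min-max sequences that additionally satisfy an asymptotic Pohozaev-type identity $P_\alpha(u_n)\to 0$ (see Proposition~\ref{propMPT}, whose proof the authors say ``follows by the same arguments as \cite[Lemma~3.5]{A}''). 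Combining $J_\alpha(u_n)\to c$, $\langle J_\alpha'(u_n),u_n\rangle\to 0$ and $P_\alpha(u_n)\to 0$ with a suitable choice of the free parameter $r$ is precisely what produces boundedness, and the admissible range of $r$ is what forces $\alpha\le\tilde p$ when $p>4$. Your proposal lacks this extra Pohozaev constraint, and without it the Fountain/symmetric mountain-pass machinery cannot be completed in the range $p>4$.
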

	Even if  such a multiplicity result was obtained for $q=1$, the proof  can be repeated without any variation for an arbitrary $q>0$. 
	
	Looking at the proof of  Theorem \ref{A}, we observe how strongly the choice of the exponent $\a$ influences geometrical and compactness properties of the functional associated to the problem for $p>4$. At a first sight,  difficulties in applying usual variational minimax arguments for  $p>4$ and large values of $\a$ could appear exquisitely technical, but it is not the case.\\
	Indeed, as a first result, in Theorem \ref{th:nonex} we use a new estimate to show that the existence of  solutions to \eqref{eq:e1} is, in fact, compromised when the impact of the coupling term $\phi u$ is, roughly speaking, not strong enough.

	In this sense, the main purpose of this paper is to study existence of solutions to \eqref{eq:e1} assuming $p>4$, $\a>\tilde p$, and letting $q$ play the role of a parameter, or, when needed, of a variable of the problem, in order to modulate the effect of coupling. 
	
	\vskip 0,2cm
	An interesting feature coming out of our study is that, under suitable assumptions, we can find both radial and nonradial solutions of \eqref{eq:e1}. 
	
	\vskip 0,2cm
	
	As to the radial case, there are a lot of surprising analogies  between \eqref{eq:e1} under conditions $p>4, \a >\tilde p$ and the three-dimensional Schr\"odinger-Maxwell system \eqref{eq:sm} studied by Ruiz \cite{Ruiz} under assumption $p\in (2,3]$.\\
	In particular, we use a new estimate (see Lemma \ref{le:ineq}) and exploit a variant of Strauss radial lemma to prove lower boundedness of the constrained functional and verify Palais - Smale condition.  Choosing $q>0$ as large as we need to allow the functional to achieve negative levels, we have all the ingredients to show that minimum is attained for a nontrivial radial function and, at a later stage, that  a multiplicity result analogous to that in\cite{ARu} holds for a possibly larger $q$.
	
	As we will explain in Remark \ref{re:bound}, the situation in the nonradial case is quite different, at least from a technical point of view. We point out that, differently from \cite{CingolaniWeth} where the better known and studied Schr\"odinger-Poisson system 
	
	\beq 
	\left	\{
	\begin{array}{l}
		-\Delta u  +  u +  \phi u -|u|^{p-2}u=0\hbox{ in } \R^2,	\\
		\Delta \phi = u^2 \hbox{ in } \R^2,
	\end{array}
	\right.
	\eeq 
	is treated, our strategy to find nonradial solutions does not rely only on a suitable change of the functional framework. Indeed we will introduce a different approach to \eqref{eq:e01}, transforming the problem  in the following nonlinear eigenvalue type: look for a nonradial $u\neq 0$ and $q>0$ such that
		\begin{equation*}
			-\Delta u +(1+|x|^\a)u + W'(u)=q(\log |x|\star u^2)u.
		\end{equation*}
	In Theorem \ref{main2} we are able to prove the existence of at least a nonradial solution to a class of problem including \eqref{eq:e1} for $p>2$.
	
	We would stress the importance of the role played by the dimension in solving \eqref{eq:e1}, as it falls in the category of problem like \eqref{eq:e01}  with {\it positive potential} (namely $V(x)u^2+W(u)\ge0$). Indeed, existence and multiplicity results we are going to provide in Theorems \ref{main} and \ref{main2} are in contrast to what happens for the analogous three-dimensional version of nonlinear Schr\"odinger-Maxwell system with positive potential. In \cite[Proposition 1.2]{DM} it was proved a general result implying nonexistence of nontrivial solutions to \eqref{eq:e1} in $\RT.$
	\vskip .5cm
	
	Finally, in the last part of the paper we are  interested in studying  \eqref{eq:e01} for other types of nonlinearities, as, for instance, those changing sign. As an example, we will treat the following model problem
	\beq \label{eq:e2}\tag{$\mathcal{P}_2$}
		\left	\{
		\begin{array}{l}
			-\Delta u  +(1+|x|^\a) u- q \phi u -|u|^{p-2}u=0\hbox{ in } \R^2,	\\
			\Delta \phi = u^2 \hbox{ in } \R^2,
		\end{array}
		\right.
		\eeq
	for $\a>0$ and $p>2$. 
	
	This situation, which appears at a first sight simpler to treat because of the formal analogy with \eqref{eq:sm}, presents indeed unexpected difficulties in proving boundedness of Palais - Smale sequences when $p<4$. Similar difficulties have been observed and overcome in \cite{DuWeth} by suitable estimates used inside a contradiction argument. Unfortunately, Du and Weth idea of proof seems to be only partially workable in our situation. Indeed, adapting their estimates to our problem, we are able to get our goal only in the range $2<p<3$, remaining the existence of solutions to \eqref{eq:e2} an open problem for $3\le p <4$.
	\vskip 0,2 cm
	The paper is organized as follows: in Section \ref{sec1} we provide some preliminaries to develop our arguments. Section \ref{sec2} is devoted to the study of \eqref{eq:e1}. It is divided in three parts: in the first, we present a nonexistence result depending on estimate provided in Lemma \ref{le:ineq} and the combination of a large value of $\a$ and a small value of $q$; in the second we look for  existence of radial solutions, providing an existence and multiplicity result due to the choice of a sufficiently large $q$; the third is related with the existence of a nonradial solution to a class of problems, including \eqref{eq:e1}. Finally, Section \ref{sec3} is devoted to the discussion of problem \eqref{eq:e2}.

	\section{Variational framework and some technical results}\label{sec1}
	
	First of all note that, when considering \eqref{eq:e1} and \eqref{eq:e2}, looking for the second equation of each one, one can see that it is quite natural to consider $\phi$ as the newtonian potential of $u^2$, i.e.,
	$$
	\phi(x) = (\Phi_2 \ast u^2)(x) = \frac{1}{2\pi}\int_{\mathbb{R}^2}\log(|x - y|) u^2(y)dy,
	$$
	where $\Phi_2$ is the fundamental solution of the laplacian operator in $\mathbb{R}^2$. Taking this into account, one can see that \eqref{eq:e1} is equivalent to
	\begin{equation}
	-\Delta u + (1+|x|^\alpha)u + q(\Phi_2 \ast u^2) u + |u|^{p-2}u = 0\quad \mbox{in $\mathbb{R}^2$}
	\label{eq:e1e}\tag{$\mathcal{P}_1'$}
	\end{equation}
	and \eqref{eq:e2}, to
	\begin{equation}
	-\Delta u + (1+|x|^\alpha)u + q(\Phi_2 \ast u^2) u - |u|^{p-2}u = 0 \quad \mbox{in $\mathbb{R}^2$}.
	\label{eq:e2e}\tag{$\mathcal{P}_2'$}
	\end{equation}
	In this section we set the variational background to deal with \eqref{eq:e1e} and \eqref{eq:e2e}. First of all let us define
	$$
	X = \left\{u \in H^1(\mathbb{R}^2); \int_{\mathbb{R}^2}(1 + |x|^\alpha)u^2 dx < +\infty \right\},
	$$
	endowed with the norm
	$$
	\|u\| = \left( \|\nabla u\|_2^2 + \int_{\mathbb{R}^2}(1 + |x|^\alpha)u^2 dx \right)^\frac{1}{2}.
	$$
	Let us also denote
	$$
	\|u\|_* = \left(\int_{\mathbb{R}^2}(1 + |x|^\alpha)u^2 dx \right)^\frac{1}{2},
	$$
	in such a way that
	$$
	\|u\| = \left(\|\nabla u\|_2^2 + \|u\|_*^2 \right)^\frac{1}{2}.
	$$
	
	Note that, since $\|u\|_{H^1(\mathbb{R}^2)} \leq \|u\|$ for all $u \in X$, it follows that the following embeddings are continuous,
	\begin{equation}
	X \hookrightarrow L^r(\mathbb{R}^2), \quad \mbox{for all $r \geq 2$}.
	\label{embeddingX}
	\end{equation}

	As in \cite{CingolaniWeth,Stubbe}, let us define
	\begin{eqnarray}
	V_0(u) & = & \frac{1}{2\pi}\int_{\mathbb{R}^2}\int_{\mathbb{R}^2} \mbox{log}(|x - y|)u^2(x)u^2(y)dx dy, \\
	V_1(u) & = & \frac{1}{2\pi}\int_{\mathbb{R}^2}\int_{\mathbb{R}^2} \mbox{log}(2 + |x - y|)u^2(x)u^2(y)dx dy, \\
	V_2(u) & = & \frac{1}{2\pi}\int_{\mathbb{R}^2}\int_{\mathbb{R}^2} \mbox{log}\left(1 + \frac{2}{|x - y|}\right)u^2(x)u^2(y)dx dy\\
	\end{eqnarray}
	and note that $V_0 = V_1 - V_2$, where $V_1, V_2 \geq 0$. Moreover, as in \cite[Section 3.1]{A}, one can prove that there exists $C_\alpha > 1$ such that
	\begin{equation}
	\label{ineq:V1}
	V_1(u) \leq \frac{C_\alpha}{\pi}\|u\|_2^2\|u\|_*^2
	\end{equation}
	and, as in \cite[Section 2]{CingolaniWeth}, there exists $D>0$
	\begin{equation}
	\label{ineq:V2}
	V_2(u) \leq D\|u\|_{\frac 83}^4.
	\end{equation}

	For $u \in X$, we define
	$$
	I_\alpha(u) = \frac{1}{2}\int_{\mathbb{R}^2}|\nabla u|^2 dx + \frac{1}{2}\int_{\mathbb{R}^2}(1 + |x|^\alpha)u^2 dx - \frac{q}{4}V_0(u) + \frac{1}{p}\int_{\mathbb{R}^2}|u|^p dx,
	$$
	and
	$$
	J_\alpha(u) = \frac{1}{2}\int_{\mathbb{R}^2}|\nabla u|^2 dx + \frac{1}{2}\int_{\mathbb{R}^2}(1 + |x|^\alpha)u^2 dx - \frac{q}{4}V_0(u) - \frac{1}{p}\int_{\mathbb{R}^2}|u|^p dx.
	$$
	By \eqref{ineq:V1} and \eqref{ineq:V2}, we can see that $I_\alpha$ and $J_\alpha$ are well defined in $X$. Also, as in  \cite[Lemma 2.2.]{CingolaniWeth}, it follows that $I_\alpha$ and $J_\alpha$ are $C^1$ in this space and their critical points are solutions, respectively,
	of \eqref{eq:e1e} and \eqref{eq:e2e}.
	
	We start with the following fundamental estimate 
		\begin{lemma}\label{le:ineq}
			Let $\a >0$, $p>2$ and $\b>2$ such that the tern $(\a,p,\b)$ satisfies the following inequality $\frac{\a p}{(p-2)(\b-1)}>2,$ and $\eps>0$. Then there exists a positive constant $C$ depending on $(\a,p,\b,\eps)$ such that for any $u \in L^2(\RD, (1+|x|^\a)\,dx)\cap L^p(\RD)$ we have 
				\begin{equation*}
					\left(\ird \log(2+|x|) u^2(x)\, dx\right)^2\le \frac {2\eps} \b \ird (1+|x|^\a) u^2(x)\, dx + C\|u\|_p^{\frac{4(\b-1)}{\b-2}}	.			
				\end{equation*}
		\end{lemma}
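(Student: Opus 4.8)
The plan is to bound the left-hand side by a single application of Hölder's inequality with the conjugate pair $(\beta,\beta')$, $\beta'=\tfrac{\beta}{\beta-1}$, choosing the factorisation so that one factor reproduces $\|u\|_*^2$. First I would write
\[
\log(2+|x|)u^2=\Big[(1+|x|^\alpha)^{1/\beta}|u|^{2/\beta}\Big]\cdot\Big[\log(2+|x|)(1+|x|^\alpha)^{-1/\beta}|u|^{2/\beta'}\Big],
\]
and apply Hölder to get
\[
\ird\log(2+|x|)u^2\le \|u\|_*^{2/\beta}\,N^{1/\beta'},\qquad N:=\ird\frac{(\log(2+|x|))^{\beta'}}{(1+|x|^\alpha)^{1/(\beta-1)}}\,u^2\,dx,
\]
since the first factor raised to the power $\beta$ integrates to $\|u\|_*^2$ and $|u|^{2/\beta}|u|^{2/\beta'}=u^2$.

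The second step is to absorb $N$ into $\|u\|_p$. I would show that the weight $v(x)=(\log(2+|x|))^{\beta'}(1+|x|^\alpha)^{-1/(\beta-1)}$ belongs to $L^{p/(p-2)}(\RD)$, so that Hölder with exponents $\tfrac p2$ and $\tfrac p{p-2}$ gives $N\le\|v\|_{p/(p-2)}\|u\|_p^2$. In polar coordinates $v^{p/(p-2)}$ decays at infinity like $r^{-\alpha p/((p-2)(\beta-1))}$ up to a power of $\log r$, so $\ird v^{p/(p-2)}$ converges exactly when $\frac{\alpha p}{(p-2)(\beta-1)}>2$, which is the standing hypothesis on the tern $(\alpha,p,\beta)$; the logarithmic factor is harmless once the exponent is strictly above the critical value $2$. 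This integrability is where the hypothesis is consumed, and it is the crux of the argument.

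Combining the two estimates yields
\[
\Big(\ird\log(2+|x|)u^2\Big)^2\le \|u\|_*^{4/\beta}\,N^{2/\beta'}\le K\,\|u\|_*^{4/\beta}\,\|u\|_p^{4(\beta-1)/\beta},\qquad K=\|v\|_{p/(p-2)}^{2/\beta'}.
\]
Finally I would apply the weighted Young inequality $ab\le \frac{\lambda^s a^s}{s}+\frac{\lambda^{-s'}b^{s'}}{s'}$ with conjugate exponents $s=\tfrac\beta2$ and $s'=\tfrac{\beta}{\beta-2}$ — admissible precisely because $\beta>2$ — to $a=\|u\|_*^{4/\beta}$ and $b=K\|u\|_p^{4(\beta-1)/\beta}$. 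Then $a^s=\|u\|_*^2$, while $b^{s'}$ carries the exponent $\frac{4(\beta-1)}{\beta}\cdot\frac{\beta}{\beta-2}=\frac{4(\beta-1)}{\beta-2}$ on $\|u\|_p$; choosing $\lambda=\eps^{2/\beta}$ makes the coefficient of $\|u\|_*^2$ equal to $\frac{\lambda^s}{s}=\frac{2\eps}{\beta}$, and the remaining constant depends only on $(\alpha,p,\beta,\eps)$, giving the claim.

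The two hypotheses then occupy complementary roles: $\beta>2$ is exactly what makes the concluding Young pair $(\tfrac\beta2,\tfrac{\beta}{\beta-2})$ legitimate, and the tern inequality is exactly what makes the residual weight $v$ lie in $L^{p/(p-2)}(\RD)$. The main difficulty is spotting the initial Hölder splitting: one must apportion both $1+|x|^\alpha$ and $u^2$ between the two factors with the exponent $1/\beta$ so that $\|u\|_*^2$ appears cleanly and the leftover weight has integrability controlled by the given tern condition.
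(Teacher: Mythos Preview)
Your proof is correct and follows essentially the same approach as the paper: the initial H\"older split with exponents $(\beta,\beta')$ producing $\|u\|_*^{2/\beta}$ and the weighted integral $N$, the second H\"older with $(\tfrac p2,\tfrac p{p-2})$ to bound $N$ by $\|u\|_p^2$ (using the tern inequality for integrability of the weight), and the Young inequality with exponents $(\tfrac\beta2,\tfrac\beta{\beta-2})$ to pass to an additive estimate. The only cosmetic differences are that the paper inserts the $\eps$ already inside the first H\"older step rather than via a scaling parameter in Young, and it applies Young before the second H\"older rather than after; neither affects the argument.
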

		\begin{proof}
			Consider $u$ as in the assumption and take $\a, p,\b$ satisfying the prescribed inequalities. Take $\eps>0$ and compute
				\begin{multline}\label{eq:ineq}
					\ird \log (2+|x|) u^2(x)\, dx =  \ird \frac{\log(2+|x|)}{(1+|x|^\a)^{\frac 1\b}} u^{\frac{2\b-2}{\b}}(x) (1+|x|^\a)^{\frac 1\b}u^{\frac 2 \b}(x)\, dx\\
															 \le \left(\ird \eps(1+|x|^\a) u^2(x)\, dx\right)^{\frac 1\b}\left(\ird \frac{\log^{\frac{\b}{\b-1}}(2+|x|)}{\eps^{\frac 1{\b-1}}(1+|x|^\a)^\frac{1}{\b-1}}u^2(x)\, dx\right)^{\frac{\b-1}{\b}}.
				\end{multline}
			As a consequence, applying the Young inequality, we obtain 
			\begin{align*}
				\bigg(\ird \log (2&+|x|) u^2(x)\, dx\bigg)^2\\
				&\le \frac {2\eps} \b \ird (1+|x|^\a) u^2(x)\, dx + \frac{\b-2}{\b\eps^{\frac 2{\b-2}}}\left(\ird \frac{\log^{\frac{\b}{\b-1}}(2+|x|)}{(1+|x|^\a)^\frac{1}{\b-1}}u^2(x)\, dx\right)^{\frac{2(\b-1)}{\b-2}}\\
				&\le \frac {2\eps} \b \ird (1+|x|^\a) u^2(x)\, dx \\
				&\qquad +\frac{\b-2}{\b\eps^{\frac 2{\b-2}}} \left(\ird \frac{\log^{\frac{\b p}{(\b-1)(p-2)}}(2+|x|)}{(1+|x|^\a)^\frac{ p}{(\b-1)(p-2)}}\, dx\right)^{\frac{2(\b-1)(p-2)}{(\b-2)p}}\|u\|_p^{\frac{4(\b-1)}{\b-2}}.
			\end{align*}
		We conclude observing that in the last line the integral converges.
		\end{proof}
		
		Now, let us prove a technical result involving $V_0$.
		\begin{lemma}\label{le:weaklycontinuous}
			$V_0$ is weakly continuous in $X$.\\
			As a consequence, since $\langle V'_0(u),u\rangle=4 V_0(u)$ for all $u\in X$, also the map
			$$u\in X\mapsto \langle V'_0(u),u\rangle\in \R$$
			is weakly continuous.
		\end{lemma}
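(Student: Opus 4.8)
The plan is to prove weak continuity of $V_0$ by exploiting its decomposition $V_0 = V_1 - V_2$ and treating the two pieces by different compactness mechanisms. Suppose $u_n \weakto u$ weakly in $X$. The first thing I would record is that by the embedding \eqref{embeddingX} the sequence is bounded in every $L^r(\RD)$, $r\ge 2$, and by the definition of the norm $\|\cdot\|$ the moment $\int_{\RD}(1+|x|^\a)u_n^2\,dx$ is bounded as well; this uniform control is what will let me pass to the limit inside the double integrals. The natural strategy is to show $V_1(u_n)\to V_1(u)$ and $V_2(u_n)\to V_2(u)$ separately, so that $V_0(u_n)=V_1(u_n)-V_2(u_n)\to V_1(u)-V_2(u)=V_0(u)$.

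For the term $V_2$, whose kernel $\log\!\bigl(1+\tfrac{2}{|x-y|}\bigr)$ is nonnegative, bounded away from the diagonal and only mildly singular as $|x-y|\to 0$, I would use the estimate \eqref{ineq:V2}, namely $V_2(u)\le D\|u\|_{8/3}^4$. The weight-free nature of this bound means the relevant quantity is controlled purely by the $L^{8/3}$ norm, and on balls the embedding $X\hookrightarrow H^1(\RD)$ together with the compact Rellich embedding $H^1(B_R)\hookrightarrow\hookrightarrow L^{8/3}(B_R)$ gives strong local convergence $u_n\to u$ in $L^{8/3}_{loc}$. The mild singularity of the $V_2$-kernel lets me split the double integral into a near-diagonal part, estimated uniformly by \eqref{ineq:V2} applied to tails, and a contribution from a large ball, on which strong local $L^{8/3}$ convergence forces $V_2(u_n)\to V_2(u)$; this is essentially the argument of \cite{CingolaniWeth,Stubbe}.

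For the term $V_1$, whose kernel $\log(2+|x-y|)$ grows at infinity, the subadditivity $\log(2+|x-y|)\le \log(2+|x|)+\log(2+|y|)$ reduces the double integral to products of the single moments $\int_{\RD}\log(2+|x|)u_n^2\,dx$. Here is where I would invoke Lemma~\ref{le:ineq}: it bounds $\bigl(\int_{\RD}\log(2+|x|)u_n^2\,dx\bigr)^2$ by $\tfrac{2\eps}{\b}\int_{\RD}(1+|x|^\a)u_n^2\,dx$ plus a constant times a power of $\|u_n\|_p$, both of which are uniformly bounded. This gives equi-integrability of the weight $\log(2+|x|)u_n^2$ at infinity: the tail $\int_{|x|>R}\log(2+|x|)u_n^2\,dx$ can be made uniformly small because, combining Lemma~\ref{le:ineq} with the same reasoning restricted to $|x|>R$, the $(1+|x|^\a)$-moment tail controls it and that tail vanishes uniformly as $R\to\infty$. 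On the fixed ball $B_R$ the strong local $L^2$ convergence handles the limit, and the uniform smallness of the tails completes the passage to the limit for the single moments, hence for $V_1$.

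The main obstacle will be the $V_1$ piece, precisely because of the unbounded logarithmic weight at infinity. The delicate point is to convert the \emph{a priori} bound from Lemma~\ref{le:ineq} into genuine tail equi-integrability uniform in $n$, so that no mass of $\log(2+|x|)u_n^2$ escapes to infinity; once this uniform tail control is established, everything else is a routine split-into-ball-plus-tail argument using local compactness. I would therefore spend most of the effort verifying that the constant $C$ in Lemma~\ref{le:ineq}, together with the freedom to choose $\eps$ small, yields the needed uniform smallness of tails, and that the admissibility condition $\tfrac{\a p}{(p-2)(\b-1)}>2$ is compatible with the hypotheses in force on $X$. The final statement about $u\mapsto\langle V_0'(u),u\rangle$ is then immediate: since this map equals $4V_0$ by the stated identity, it inherits weak continuity directly.
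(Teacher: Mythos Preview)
Your decomposition $V_0=V_1-V_2$ and your handling of $V_2$ match the paper (which simply cites \cite[Lemma~2.2]{CingolaniWeth}). For $V_1$, however, the paper takes a much shorter route. The key fact you never invoke is that the coercive weight $1+|x|^\alpha$ makes the embedding $X\hookrightarrow L^r(\RD)$ \emph{compact} for every $r\ge 2$, so weak convergence in $X$ already gives $u_n\to u$ strongly in every global $L^r(\RD)$, not merely in $L^r_{\mathrm{loc}}$. With that in hand the paper telescopes $V_1(u_n)-V_1(u)$, applies $\log(2+|x-y|)\le\log(2+|x|)+\log(2+|y|)$, and then uses only the elementary pointwise bounds $\log(2+r)\le C_\alpha(1+r^\alpha)$ and $\log^2(2+r)\le C'_\alpha(1+r^\alpha)$ together with H\"older, arriving at terms of the type $\|u_n\|_*^2\,\|u_n-u\|_2\,\|u_n+u\|_2\to 0$. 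Lemma~\ref{le:ineq} plays no role here; your detour through it, with the extra parameters $p,\beta$ and their admissibility condition, is unnecessary.

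Your sketch also contains a concrete error. You assert that ``the $(1+|x|^\a)$-moment tail \dots\ vanishes uniformly as $R\to\infty$'', but weak convergence in $X$ yields only \emph{boundedness} of $\int_{|x|>R}(1+|x|^\alpha)u_n^2\,dx$, not uniform decay: the dilations $u_n(x)=n^{-(\alpha+2)/2}\phi(x/n)$ with $\phi\in C_c^\infty\setminus\{0\}$ converge weakly to zero in $X$ while this tail stays bounded below by $\int|y|^\alpha\phi^2\,dy>0$. What \emph{does} vanish uniformly is $\int_{|x|>R}\log(2+|x|)u_n^2\,dx$, since $\sup_{|x|>R}\tfrac{\log(2+|x|)}{1+|x|^\alpha}\to 0$ and the full weighted moment is merely bounded --- but that is precisely the paper's elementary estimate, not an output of Lemma~\ref{le:ineq}. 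Note finally that subadditivity of the kernel gives only an \emph{upper bound} on $V_1$, so convergence of the single moments $\int\log(2+|x|)u_n^2\,dx$ does not by itself yield $V_1(u_n)\to V_1(u)$; one still has to telescope the difference and control the cross terms, which your outline does not address.
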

		\begin{proof}
		Let $(u_n) \subset X$ and $u \in X$ such that
		$$
		u_n \rightharpoonup u \quad \mbox{in $X$.}
		$$
		By the coerciveness of the potential $x \mapsto \left(1 + |x|^\alpha\right)$, the embedding $X \hookrightarrow L^r(\mathbb{R}^2)$ is compact, for all $r \geq 2$. Then,
		$$
		u_n \to u \quad \mbox{in $L^r(\mathbb{R}^2)$, for all $r \geq 2$.}
		$$
		Then
		\begin{eqnarray}
		\nonumber |V_1(u_n) - V_1(u)| & = & \int_{\mathbb{R}^2}\int_{\mathbb{R}^2}\mbox{log}\left(2 + |x-y|\right)u_n^2(x)\left(u_n^2(y) - u^2(y)\right) dxdy\\
		\nonumber & & + \int_{\mathbb{R}^2}\int_{\mathbb{R}^2}\mbox{log}\left(2 + |x-y|\right)u^2(x)\left(u_n^2(y) - u^2(y)\right) dxdy\\
		\nonumber & \leq & \int_{\mathbb{R}^2}\int_{\mathbb{R}^2}\mbox{log}\left(2 + |x|\right)u_n^2(x)\left|u_n(y) - u(y)\right| \left|u_n(y) + u(y)\right| dxdy\\
		\nonumber & & + \int_{\mathbb{R}^2}\int_{\mathbb{R}^2}\mbox{log}\left(2 + |y|\right)u^2_n(x)\left|u_n(y) - u(y)\right| \left|u_n(y) + u(y)\right| dxdy\\
		\nonumber & & + \int_{\mathbb{R}^2}\int_{\mathbb{R}^2}\mbox{log}\left(2 + |x|\right)u^2(x)\left|u_n(y) - u(y)\right| \left|u_n(y) + u(y)\right| dxdy\\
		\label{eq:wc1} & & + \int_{\mathbb{R}^2}\int_{\mathbb{R}^2}\mbox{log}\left(2 + |y|\right)u^2(x)\left|u_n(y) - u(y)\right| \left|u_n(y) + u(y)\right| dxdy.
		\end{eqnarray}
		Let us denote $C_\alpha, C_\alpha' > 0$, such that
		\begin{equation}
		\mbox{log}(2+r) \leq C_\alpha(1+r^\alpha) \quad \mbox{and} \quad \mbox{log}^2(2+r) \leq C_\alpha'(1+r^\alpha),
		\label{eq:wc2}
		\end{equation}
		for all $r > 0$. Then, \eqref{eq:wc1}, \eqref{eq:wc2} and H\"older inequality imply that
		\begin{eqnarray*}
		|V_1(u_n) - V_1(u)| & \leq & C_\alpha \int_{\mathbb{R}^2}\int_{\mathbb{R}^2}\left(1 + |x|^\alpha\right)u_n^2(x)\left|u_n(y) - u(y)\right| \left|u_n(y) + u(y)\right| dxdy\\
		& & + \|u_n\|_2^2 \int_{\mathbb{R}^2}\mbox{log}^2\left(2 + |y|\right)\left|u_n(y) + u(y)\right|^2 dy \|u_n - u\|_2^2\\
		& & + C_\alpha\int_{\mathbb{R}^2}\int_{\mathbb{R}^2}\left(1 + |x|^\alpha\right)u^2(x)\left|u_n(y) - u(y)\right| \left|u_n(y) + u(y)\right| dxdy\\
		& & + \|u\|_2^2 \int_{\mathbb{R}^2}\mbox{log}^2\left(2 + |y|\right)\left|u_n(y) + u(y)\right|^2 dy \|u_n  - u\|_2^2\\
		& \leq & C_\alpha \|u_n\|_*^2 \|u_n - u\|_2^2 \|u_n + u\|_2^2 + C_\alpha'\|u_n\|_2^2\|u_n - u\|_2^2 \|u_n + u\|_*^2\\
		& & C_\alpha \|u\|_*^2 \|u_n - u\|_2^2 \|u_n + u\|_2^2 + C_\alpha'\|u\|_2^2\|u_n - u\|_2^2 \|u_n + u\|_*^2\\
		& = & o_n(1).
		\end{eqnarray*}
		
		Hence $V_1$ is weakly continuous. Moreover, by \cite[Lemma 2.2]{CingolaniWeth}, $V_2$ is also weakly continuous. By these facts, it follows that
		$$
		V_0(u_n) \to V_0(u),
		$$
		as $n \to +\infty$.
		\end{proof}

\section{Existence and nonexistence results to \eqref{eq:e1} for $p>4$} \label{sec2}

\subsection{A nonexistence result}

We start this subsection by proving a nonexistence result to \eqref{eq:e1}, for $p > 4$. More specifically, we are going to prove the following result.

\begin{theorem}\label{th:nonex}
	Let $p>4$ and $\a>\tilde p$. Then there exists a constant $\bar q>0$ such that if $q\in(0,\bar q)$, then \eqref{eq:e1} (or equivalently \eqref{eq:e1e}) does not have any nontrivial solution.
\end{theorem}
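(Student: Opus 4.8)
The plan is to assume a nontrivial solution $u$ exists, extract from it both a Nehari-type and a Pohozaev-type identity, and then confront these with Lemma~\ref{le:ineq}. The decisive observation is that the threshold exponent $\tilde p=\frac{2p-4}{p-4}$ is exactly the value of $\beta$ for which the power $\frac{4(\beta-1)}{\beta-2}$ appearing in that lemma equals $p$, and for which the admissibility condition $\frac{\alpha p}{(p-2)(\beta-1)}>2$ becomes precisely $\alpha>\tilde p$; thus the hypothesis $\alpha>\tilde p$ is tailored to make the lemma usable with $\beta=\tilde p$. First I would test \eqref{eq:e1e} with $u$ to obtain the Nehari identity
\[
\|\nabla u\|_2^2+\|u\|_*^2+\|u\|_p^p=qV_0(u),
\]
which forces $V_0(u)>0$ and gives $\|u\|_*^2\le qV_0(u)$ and $\|u\|_p^p\le qV_0(u)$. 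Then I would test with $x\cdot\nabla u$; in dimension two the Dirichlet term disappears, the nonlocal term is handled by the symmetrization $\ird(x\cdot\nabla\phi)u^2\,dx=\frac1{4\pi}\|u\|_2^4$, and combining the resulting identity with the Nehari one yields
\[
\|\nabla u\|_2^2+\frac{p-2}{p}\|u\|_p^p+\frac{q}{8\pi}\|u\|_2^4=\frac{\alpha}{2}\ird|x|^\alpha u^2\,dx .
\]
Discarding the first two (nonnegative) terms on the left and using $\ird|x|^\alpha u^2\,dx\le\|u\|_*^2$, this gives the single consequence I need: $\|u\|_2^2\le 2\sqrt{\pi\alpha/q}\,\|u\|_*$.

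Next I would bound $V_0$ from above. Writing $L:=\ird\log(2+|x|)u^2\,dx$ and using $V_0=V_1-V_2$ with $V_2\ge0$ together with $\log(2+|x-y|)\le\log(2+|x|)+\log(2+|y|)$, one gets $V_0(u)\le V_1(u)\le\frac1\pi\|u\|_2^2\,L$. Feeding in the Pohozaev consequence and then the Nehari bound $\|u\|_*^2\le qV_0(u)$ produces $\|u\|_*^2\le\frac{2\sqrt{\alpha q}}{\sqrt\pi}\,\|u\|_*\,L$, hence $\|u\|_*^2\le\frac{4\alpha q}{\pi}L^2$, and the very same chain starting from $\|u\|_p^p\le qV_0(u)$ gives $\|u\|_p^p\le\frac{4\alpha q}{\pi}L^2$. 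Now Lemma~\ref{le:ineq} with $\beta=\tilde p$ reads $L^2\le\frac{2\varepsilon}{\tilde p}\|u\|_*^2+C\|u\|_p^p$, and substituting the two previous bounds gives
\[
L^2\le\frac{4\alpha q}{\pi}\Big(\frac{2\varepsilon}{\tilde p}+C\Big)L^2 .
\]
Since $u\neq0$ implies $L\ge(\log2)\|u\|_2^2>0$, I may divide by $L^2$ and obtain $1\le\frac{4\alpha q}{\pi}\big(\frac{2\varepsilon}{\tilde p}+C\big)$. Fixing $\varepsilon=1$ makes the bracket a constant depending only on $\alpha$ and $p$, so every nontrivial solution must satisfy $q\ge\bar q:=\frac{\pi}{4\alpha(2/\tilde p+C)}>0$; equivalently, \eqref{eq:e1e} has no nontrivial solution when $q\in(0,\bar q)$.

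The step I expect to be most delicate is the rigorous justification of the Pohozaev identity, since a priori $u$ is only a weak solution and $\phi=\Phi_2\ast u^2$ grows logarithmically at infinity. I would first bootstrap regularity and, crucially, exploit the confining potential $1+|x|^\alpha$ to show that $u$ decays faster than any power of $|x|$, so that products such as $|x|\,\phi\,u^2$ and $|x||\nabla u||u|$ are integrable and the spherical boundary terms coming from the integrations by parts vanish; the symmetrized form of the nonlocal contribution avoids any boundary term altogether. Granting these decay estimates the remaining manipulations are routine, and the genuinely new ingredient is Lemma~\ref{le:ineq} used precisely at its applicability threshold $\beta=\tilde p$, which is what ties the nonexistence range to the condition $\alpha>\tilde p$.
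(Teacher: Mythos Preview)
Your argument is correct, but it takes a noticeably longer route than the paper's. The paper uses \emph{only} the Nehari identity $\|\nabla u\|_2^2+\|u\|_*^2+\|u\|_p^p=qV_0(u)$ together with the trivial pointwise bound $\log(2+|x|)\ge\log 2$, which gives $\|u\|_2^2\le L/\log 2$ and hence $V_1(u)\le \frac{1}{\pi\log 2}L^2$ immediately. Plugging Lemma~\ref{le:ineq} (with $\beta=\tilde p$, $\varepsilon=1$) into the Nehari identity then yields
\[
0\ge \|\nabla u\|_2^2+(1-qC_1)\|u\|_*^2+(1-qC_2)\|u\|_p^p,
\]
forcing $u=0$ once $q<\min(C_1^{-1},C_2^{-1})$. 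No Pohozaev identity is needed.

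Your detour through the Pohozaev identity to bound $\|u\|_2^2\le 2\sqrt{\pi\alpha/q}\,\|u\|_*$ is valid and yields an explicit (different) value of $\bar q$, but it is superfluous: the elementary bound $\|u\|_2^2\le L/\log 2$ does the same job of turning $V_1(u)\le\frac1\pi\|u\|_2^2L$ into an $L^2$ estimate, and it sidesteps entirely the regularity/decay issues you flag as ``most delicate.'' In short, both proofs hinge on Lemma~\ref{le:ineq} at the threshold $\beta=\tilde p$; the paper's version is shorter and purely variational (test with $u$ only), while yours gains nothing from the extra Pohozaev input but pays for it with a nontrivial justification step.
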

\begin{proof}
	Assume $u \in X$ is a solution of \eqref{eq:e1e}. Then 
	\begin{align*}
	0&=\ird |\n u(x)|^2\, dx + \ird (1+|x|^\a)u^2(x)\, dx\\
	&\qquad- \frac q{2\pi} \ird \ird \log (2+|x-y|)u^2(x)u^2(y)\, dx dy\\
	&\qquad+\frac q{2\pi}\ird\ird \log\bigg(1+\frac{2}{|x-y|}\bigg)u^2(x)u^2(y)\,dxdy +\ird |u(x)|^p\,dx\\
	&\ge \ird |\n u(x)|^2\, dx + \ird (1+|x|^\a)u^2(x)\, dx\\
	&\qquad- \frac q{\pi} \ird \ird \log (2+|x|)u^2(x)u^2(y)\, dx dy +\ird |u(x)|^p\,dx\\
	&\ge \ird |\n u(x)|^2\, dx + \ird (1+|x|^\a)u^2(x)\, dx\\
	&\qquad- \frac q{\pi\log 2} \left( \ird \log (2+|x|)u^2(x)\, dx\right)^2 +\ird |u(x)|^p\,dx.
	\end{align*}
	By Lemma \ref{le:ineq} applied for $\b=\frac{2p-4}{p-4}$ and $\eps=1$ we can proceed as follows
	\begin{align*}
	0&\ge \ird |\n u(x)|^2\, dx + \ird (1+|x|^\a)u^2(x)\, dx\\
	&\qquad- q C_1 \ird (1+|x|^\a)u^2(x)\, dx -q C_2 \|u\|_p^p + \|u\|_p^p.
	\end{align*}
	Choosing  $\bar q =\min \left(\frac{1}{C_1},\frac 1{C_2}\right)$, we easily conclude that $u=0$.
\end{proof}

\subsection{Existence of radial solutions}

In this subsection, we are going to study the problem in the set of radial functions. Following the very original approach developed by Ruiz \cite[Section 4]{Ruiz}, we again would like to emphasize the analogies arising between the three dimensional Schr\"odinger-Maxwell system perturbed by the nonlinear local term $-|u|^{p-2}u$, and this two dimensional model of the Schr\"odinger-Maxwell system, in presence of the reversed sign local nonlinearity  $|u|^{p-2}u$.  

We introduce the space
$$
X_r = \{u \in X; \, u(x) = u(|x|)\},
$$
which, as is well known, is a natural constraint.

The following Strauss type radial Lemma will be a useful tool for subsequent estimates 
\begin{lemma}\label{le:str}
	Let $\a>0$. Then every $u\in X_r$ is almost everywhere equal to a continuous function $U \in \RD\setminus\{0\}$. Moreover, there exists a constant $\tilde C>0$ uniform with respect to $u\in X_r$ such that 
	\begin{equation*}
	|u(x)|\le \frac{\tilde C}{|x|^{\frac{\a+2}4}}\|u\|, \quad \hbox{ for }|x|\ge 1.
	\end{equation*}
	\begin{proof}
		Since the first part is standard, we only prove the estimate. 
		
		Let $k\le \frac{\a+2}2$ and consider $u$ a radial function in $C_0^\infty(\RD)$.
		For any $r\ge 0$, we have that 
		\begin{align*}
		\left|\frac{d}{dr}\left(r^ku^2(r)\right)\right|&\le k r^{k-1}u^2(r)+2r^k|u(r)||u'(r)|\\
		&\le k r^{k-1}u^2(r) + r^{2k-1}u^2(r) + r|u'(r)|^2. 
		\end{align*}
		Now, fix $r\ge 1$ and integrate $-\frac{d}{ds}\left(s^ku^2(s)\right)$ in the interval $[r,+\infty)$. We have
		
		\begin{align*}
		r^ku^2(r)&\le k\int_r^{+\infty} s^{k-2-\a}s s^\a u^2(s)\, ds+\int_r^{+\infty}s^{2k-2-\a}ss^\a u^2(s)\, ds+ \frac{\|\n u\|_2^2} {2\pi}\\
		&\le \frac k{\sqrt{2\pi}} r^{k-2-\a}\ird (1+|x|^\a) u^2\, dx+\frac {r^{2k-2-\a}}{\sqrt{2\pi}}\ird (1+|x|^\a) u^2\, dx+\frac{\|\n u\|_2^2} {2\pi}.
		\end{align*}
		The conclusion follows taking $k=\frac{\a+2}2$ and then by a density argument.
	\end{proof}
	
\end{lemma}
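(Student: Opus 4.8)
The plan is to follow the classical Strauss argument, adapted so as to exploit the weight $(1+|x|^\a)$, which is what upgrades the usual radial $H^1(\RD)$ decay $|x|^{-1/2}$ to the sharper rate $|x|^{-(\a+2)/4}$. The continuity away from the origin is the standard part: once written in polar coordinates, the radial profile of a function in $X_r\subset H^1(\RD)$ lies in $H^1_{\mathrm{loc}}(0,+\infty)$, and the one-dimensional Sobolev embedding gives a continuous representative on $\RD\setminus\{0\}$. I would therefore concentrate on the pointwise bound, establishing it first for a radial $u\in C_0^\infty(\RD)$ and recovering the general case at the end by density of such functions in $X_r$.

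For the core estimate, writing $u=u(r)$ with $r=|x|$ and fixing an exponent $k>0$ to be chosen, I would differentiate $r^k u^2(r)$ and treat the cross term with Young's inequality, obtaining
\begin{equation*}
\left|\frac{d}{dr}\big(r^k u^2(r)\big)\right|\le k\,r^{k-1}u^2(r)+r^{2k-1}u^2(r)+r\,|u'(r)|^2.
\end{equation*}
Since $u$ has compact support, $s^k u^2(s)\to 0$ as $s\to+\infty$, so integrating $-\frac{d}{ds}\big(s^k u^2(s)\big)$ over $[r,+\infty)$ for a fixed $r\ge 1$ bounds $r^k u^2(r)$ by the integral of the right-hand side over $[r,+\infty)$.

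The decisive step is the choice of $k$. Rewriting the two weighted integrands as $s^{k-2-\a}\cdot s^{1+\a}u^2$ and $s^{2k-2-\a}\cdot s^{1+\a}u^2$, and recalling that $2\pi\int_0^{\infty}s^{1+\a}u^2\,ds=\ird|x|^\a u^2\,dx\le\|u\|_*^2$ while $2\pi\int_0^{\infty}s\,|u'|^2\,ds=\|\n u\|_2^2$, I would take $k=\frac{\a+2}{2}$. This is exactly the value making both residual exponents nonpositive, namely $k-2-\a=-k\le 0$ and $2k-2-\a=0$, so that $s^{k-2-\a}\le r^{-k}$ and $s^{2k-2-\a}=1$ on $[r,+\infty)$. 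Collecting the three contributions then yields, for $r\ge 1$,
\begin{equation*}
r^k u^2(r)\le C\left(r^{-k}\|u\|^2+\|u\|^2\right)\le C'\|u\|^2,
\end{equation*}
whence $u^2(r)\le C'\,r^{-k}\|u\|^2$ and, taking square roots with $k=\frac{\a+2}{2}$, the announced bound $|u(x)|\le \frac{\tilde C}{|x|^{(\a+2)/4}}\|u\|$.

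I expect the only delicate points to be organizational rather than substantial. First, one must justify the vanishing boundary term at infinity and, more importantly, make the density step quantitative: since the estimate holds on $C_0^\infty(\RD)$ with a constant $\tilde C$ independent of $u$, its stability under $\|\cdot\|$-convergence (which gives, up to a subsequence, pointwise a.e.\ convergence, while the right-hand side is continuous in the norm) transfers it to all of $X_r$ with the same $\tilde C$. Second, the sharp exponent $k=\frac{\a+2}{2}$ has to be identified: any larger $k$ destroys the control of the weighted integrals by $\|u\|^2$, while any smaller $k$ wastes decay. Once $k$ is fixed, the remaining inequalities are entirely elementary.
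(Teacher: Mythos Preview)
Your proposal is correct and follows essentially the same route as the paper: both differentiate $r^k u^2(r)$, apply Young's inequality to the cross term, integrate over $[r,+\infty)$, rewrite the weighted integrands so as to absorb them into $\|u\|_*^2$ and $\|\n u\|_2^2$, and then specialize to $k=\frac{\a+2}{2}$ before passing to the density argument. If anything, your explanation of why $k=\frac{\a+2}{2}$ is the balancing exponent is slightly more explicit than the paper's, but the underlying computation is the same.
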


\begin{lemma}\label{th:bound}
	For any $\a>\tilde p$ there exists $\tilde q >0$ such that, for every $q\ge \tilde q$ we have $\inf_{u\in X_r} I_\a(u)\in (-\infty,0)$.
\end{lemma}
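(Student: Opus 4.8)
The plan is to prove the two assertions in the statement separately: that $I_\alpha$ is bounded from below on all of $X$ (hence on $X_r$) for every fixed $q>0$, and that for $q$ large the infimum is strictly negative. Since the lower bound will hold for all $q>0$, the threshold $\tilde q$ will only be dictated by the negativity part, and the two facts combine to give $\inf_{X_r}I_\alpha\in(-\infty,0)$.

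For the lower bound I would start from $V_0=V_1-V_2$ with $V_2\ge 0$, so that $-\tfrac q4 V_0(u)\ge -\tfrac q4 V_1(u)$ and only $V_1$ needs to be controlled. Exactly as in the proof of Theorem \ref{th:nonex}, I would use $\log(2+|x-y|)\le\log(2+|x|)+\log(2+|y|)$ together with $\|u\|_2^2\le\tfrac1{\log 2}\int_{\mathbb{R}^2}\log(2+|x|)u^2$ to obtain $V_1(u)\le\tfrac1{\pi\log 2}\big(\int_{\mathbb{R}^2}\log(2+|x|)u^2\big)^2$. The crucial step is then Lemma \ref{le:ineq}: since $\alpha>\tilde p$, the interval $\big(\tilde p,\,1+\tfrac{\alpha p}{2(p-2)}\big)$ is nonempty, and I would fix $\beta$ inside it. This single choice does two things at once: the upper constraint $\beta<1+\tfrac{\alpha p}{2(p-2)}$ is precisely the hypothesis $\tfrac{\alpha p}{(p-2)(\beta-1)}>2$ required to apply Lemma \ref{le:ineq}, while $\beta>\tilde p$ is precisely what makes the resulting exponent $\tfrac{4(\beta-1)}{\beta-2}$ strictly smaller than $p$. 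Applying the lemma with this $\beta$ and with $\varepsilon$ chosen small (depending on $q$) so that $\tfrac{q}{4\pi\log 2}\cdot\tfrac{2\varepsilon}{\beta}\le\tfrac14$ gives
$$ I_\alpha(u)\ \ge\ \tfrac12\|\nabla u\|_2^2+\tfrac14\|u\|_*^2-c\,\|u\|_p^{\frac{4(\beta-1)}{\beta-2}}+\tfrac1p\|u\|_p^p . $$
Because the exponent $\tfrac{4(\beta-1)}{\beta-2}$ is less than $p$, the scalar map $t\mapsto\tfrac1p t^p-c\,t^{4(\beta-1)/(\beta-2)}$ is bounded below, whence $\inf_{X_r}I_\alpha>-\infty$.

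For the strict negativity it suffices to exhibit one $u_0\in X_r$ with $V_0(u_0)>0$: indeed $I_\alpha(u_0)=C_0-\tfrac q4 V_0(u_0)$ with $C_0=\tfrac12\|\nabla u_0\|_2^2+\tfrac12\|u_0\|_*^2+\tfrac1p\|u_0\|_p^p$ independent of $q$, which becomes negative once $q$ exceeds $4C_0/V_0(u_0)$; taking $\tilde q$ to be any value above this threshold yields the claim. To produce such a $u_0$ I would take any nonzero radial $\psi\in C_c^\infty(\mathbb{R}^2)\subset X_r$ and dilate, $\psi_\lambda:=\psi(\cdot/\lambda)$; a change of variables gives $V_0(\psi_\lambda)=\lambda^4\big(\tfrac{\log\lambda}{2\pi}\|\psi\|_2^4+V_0(\psi)\big)$, which is positive for $\lambda$ large, so one sets $u_0=\psi_\lambda$ for such a $\lambda$. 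The dilation remains radial and compactly supported, hence lies in $X_r$.

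I expect the genuine obstacle to be the lower bound, specifically the bookkeeping that ties the admissible range of $\beta$ to the hypothesis $\alpha>\tilde p$: one must verify that a single $\beta$ can simultaneously meet the applicability condition of Lemma \ref{le:ineq} and make the $L^p$ exponent subcritical, and that the $\varepsilon$-absorption still leaves a strictly positive multiple of $\|u\|_*^2$. The negativity part is comparatively soft, the only delicate points being the scaling identity for $V_0$ and checking that the dilate stays in $X_r$.
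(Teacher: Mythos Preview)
Your proof is correct, and in fact cleaner and stronger than the paper's. The overall two-part structure (lower bound via Lemma~\ref{le:ineq}; negativity via dilation producing a test function with $V_0>0$) matches the paper, and your handling of the negativity is essentially identical to the paper's (which simply cites \eqref{eq:divergence}). The genuine difference is in the lower-bound argument.

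The paper does \emph{not} apply Lemma~\ref{le:ineq} as stated. Instead it reruns the Young--H\"older argument of \eqref{eq:ineq} with a different final H\"older pairing, obtaining on the right-hand side the integral $\int_{\R^2}|u|^{4(\beta-1)/(\beta-2)}$ rather than the norm power $\|u\|_p^{4(\beta-1)/(\beta-2)}$ (this variant requires $\alpha>\beta$ for convergence). That leaves a \emph{pointwise} lower bound $I_\alpha(u)\ge \tfrac18\|u\|^2+\tfrac q4 V_2(u)+\int f(u)$ with $f(s)=\tfrac14 s^2-C_\eps s^{4(\beta-1)/(\beta-2)}+\tfrac1p s^p$; since $\int f(u)$ can still be $-\infty$ via unbounded measure of the set $\{f(u)<0\}$, the paper then runs a contradiction argument using the Strauss-type radial decay of Lemma~\ref{le:str} together with the retained $V_2$-term to bound that measure.

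Your route sidesteps all of this machinery. By invoking Lemma~\ref{le:ineq} directly you land on $-c\|u\|_p^{4(\beta-1)/(\beta-2)}+\tfrac1p\|u\|_p^p$, and the one-variable function $t\mapsto\tfrac1p t^p-ct^{4(\beta-1)/(\beta-2)}$ is bounded below on $[0,\infty)$ because $4(\beta-1)/(\beta-2)<p$. Your interval computation $\beta\in\big(\tilde p,\,1+\tfrac{\alpha p}{2(p-2)}\big)$ is exactly right: nonemptiness is equivalent to $\alpha>\tilde p$, the upper endpoint encodes the hypothesis of Lemma~\ref{le:ineq}, and $\beta>\tilde p$ is precisely $4(\beta-1)/(\beta-2)<p$. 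A notable dividend: your argument never uses radiality, so it actually shows $\inf_{u\in X}I_\alpha(u)>-\infty$ for every $q>0$ --- this is stronger than the stated lemma and in fact answers the question the authors leave open in Remark~\ref{re:bound}.
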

\begin{proof}
	First of all, let us consider $\tilde q$ large enough in such a way that $\inf_{u\in X_r} I_\a(u)<0$ (such a constant $\tilde q$ exists by \eqref{eq:divergence}). For $q\ge \tilde q$, let us show that 
	$$\inf_{u\in X_r} I_\a(u)\neq -\infty.$$
	
	Using Young's and H\"older's inequalities as in \eqref{eq:ineq}, we obtain, for $\b >2$ and $\eps>0$ 
	\begin{align*}
	\bigg(\ird \log (2&+|x|) u^2(x)\, dx\bigg)^2\\
	&\le \frac {2\eps} \b \ird (1+|x|^\a) u^2(x)\, dx + \frac{\b-2}{\b\eps^{\frac 2{\b-2}}}\left(\ird \frac{\log^{\frac{\b}{\b-1}}(2+|x|)}{(1+|x|^\a)^\frac{1}{\b-1}}u^2(x)\, dx\right)^{\frac{2(\b-1)}{\b-2}}\\
	&\le \frac {2\eps} \b \ird (1+|x|^\a) u^2(x)\, dx \\
	&\qquad +\frac{\b-2}{\b\eps^{\frac 2{\b-2}}} \left(\ird \frac{\log^{2}(2+|x|)}{(1+|x|^\a)^\frac{ 2}{\b}}\, dx\right)^{\frac{\b}{\b-2}}\ird |u|^{\frac{4(\b-1)}{\b-2}}(x)\,dx.
	\end{align*}
	Assume that $\a > \b$ in such a way that
	\begin{equation*}
	\ird \frac{\log^{2}(2+|x|)}{(1+|x|^\a)^\frac{ 2}{\b}}\, dx<+\infty.
	\end{equation*}
	Then in a similar way as in Theorem \ref{th:nonex}
	\begin{align*}
	I_a(u)&\ge \frac{1}{2}\int_{\mathbb{R}^2}|\nabla u|^2 dx + \frac{1}{2}\int_{\mathbb{R}^2}(1 + |x|^\alpha)u^2 dx - \frac{q}{4}V_0(u) + \frac{1}{p}\int_{\mathbb{R}^2}|u|^p dx\\
	&\ge \frac{1}{2}\int_{\mathbb{R}^2}|\nabla u|^2 dx + \frac{1}{2}\int_{\mathbb{R}^2}(1 + |x|^\alpha)u^2 dx+\frac q4V_2(u)\\
	&\qquad  - \frac q{4\pi\log 2} \left( \ird \log (2+|x|)u^2(x)\, dx\right)^2 + \frac{1}{p}\int_{\mathbb{R}^2}|u|^p dx\\
	&\ge \frac{1}{2}\int_{\mathbb{R}^2}|\nabla u|^2 dx +\left(\frac 14 -\frac{q\eps}{2\pi\b\log 2 }\right) \ird (1+|x|^\a)u^2\, dx \\
	&\qquad+\frac q4V_2(u) +\ird \left(\frac 1 4 u^2  - C_\eps |u|^{\frac{4(\b-1)}{\b-2}}+\frac 1 p |u|^p\right)\, dx.
	\end{align*}
	
	Now, since the functional has to be coercive in its principal part, we choose $\eps>0$ such that $\frac 14 -\frac{q\eps}{2\pi\b\log 2 }>\frac 18.$ Moreover, since $\a>\tilde p$, we can choose $\beta$ sufficiently close to $\a$ in such a way $\frac{4(\b-1)}{\b-2}<p$. On the other hand, it is immediate to see that $4<\frac{4(\b-1)}{\b-2}.$
	
	Now  we prove that the functional $H$ in $X_r$ defined by 
	\begin{align*}
	H(u)&=\frac{1}{2}\int_{\mathbb{R}^2}|\nabla u|^2 dx +\left(\frac 14 -\frac{q\eps}{2\pi\b\log 2 }\right) \ird (1+|x|^\a)u^2\, dx \\
	&\qquad+\frac q4V_2(u) +\ird \left(\frac 1 4 u^2  - C_\eps |u|^{\frac{4(\b-1)}{\b-2}}+\frac 1 p |u|^p\right)\, dx
	\end{align*}
	is bounded from below.
	
	We define  $f:[0,+\infty)\to\R$ such that 
	\begin{equation*}
	f(s)=\frac 1 4 s^2  - C_\eps s^{\frac{4(\b-1)}{\b-2}}+\frac 1 p s^p
	\end{equation*}
	and set $m=\min_{s\ge 0}f(s)$. Assume $m<0$ (otherwise we are already done) and consider the open set $N=\{s>0\mid f(s)<0\}.$ A simple study of $f$ shows that $N$ is an open bounded interval $(\g,\d)$ with $\g>0$.
	
	We proceed with the following lower estimate on $H$
	
	\begin{align}\label{eq:estimateonH}
	H(u)&\ge \ird \frac 14 |\n u|^2\, dx+\frac 18 (1+|x|^\a)u^2 \, dx+\frac q4V_2(u) +\ird f(u)\, dx\\
	&\ge \ird \frac 14 |\n u|^2\, dx+\frac 18 (1+|x|^\a)u^2 \, dx+\frac q4V_2(u) +\int_{\{x\in\RD\mid|u(x)|\in (\g,\d)\}} f(u)\, dx\nonumber\\
	&\ge \ird \frac 14 |\n u|^2\, dx+\frac 18 (1+|x|^\a)u^2 \, dx+\frac q4V_2(u) +m|A_u|\nonumber
	\end{align}
	where $A_u=\{x\in\RD\mid|u(x)|\in (\g,\d)\}$ and $|A_u|$ denotes the Lebesgue measure of $A_u$.
	
	Assume by contradiction that there exists a sequence $(u_n)_{n \in \N}$ in $X_r$ such that $\lim_{n \to +\infty}H(u_n) = -\infty.$  By this fact, and since we can assume that up to a subsequence $H(u_n)<0$ for all $n\ge 0$, we have
	\begin{itemize}
		\item[1.] $\lim_n |A_n|=+\infty$,
		\item[2.] $\frac 1 8 \|u_n\|^2\le -m |A_n|$, for all $n\ge 1$,
		\item[3.] $\frac q 4 V_2(u_n) \le -m|A_n|$, for all $n\ge 1$,
	\end{itemize} 
	where $A_n = A_{u_n}$.
	
	Take $n\ge 1$ and set $\rho_n=\sup_{x\in A_n}|x|$. Since $|A_n|>0$, certainly $\rho_n>0$ and, by Lemma \ref{le:str}, $\rho_n=\max_{x\in A_n}|x|\in\R$.
	
	By $3.$, we have 
	\begin{align*}
	|A_n|&\ge -\frac q{4m}V_2(u_n)\ge -\frac{q}{8m\pi}\int_{\mathbb{R}^2}\int_{\mathbb{R}^2} \mbox{log}\left(1 + \frac{2}{|x - y|}\right)u^2(x)u^2(y)dx dy\\
	&\ge -\frac{q}{8m\pi} \int_{A_n}\int_{A_n}\mbox{log}\left(1 + \frac{2}{|x - y|}\right)u^2(x)u^2(y)dx dy\\
	&\ge -\frac{q\g^4}{8m\pi}\mbox{log}\left (1+\frac 1\rho_n\right)|A_n|^2,
	\end{align*}
	and then  there exists $c_1>0$ such that
	\begin{equation}\label{eq:bound1}
	\mbox{log}\left (1+\frac 1\rho_n\right)|A_n|\le c_1.
	\end{equation}
	Now, if $(\rho_n)_n$ is bounded from above, we easily get a contradiction comparing \eqref{eq:bound1} with $1.$ Assume then that $\lim_n\rho_n=+\infty$. Define $x_n\in A_n$ such that $|x_n|=\rho_n$.  By $2.$, Lemma \ref{le:str} and since we can assume $|x_n|\ge 1$ for every $n\ge 1$,
	\begin{equation*}
	\g \le |u_n(x_n)|\le\frac{\tilde C}{|x_n|^{\frac{\a+2}4}}\|u\|\le \frac{\sqrt{-8m} \tilde C}{\rho_n^{\frac{\a + 2}4}} |A_n|^{\frac 12}
	\end{equation*}
	and then there exists $c_2>0$ such that
	\begin{equation}\label{eq:bound2}
	\rho_n^{\frac{\a + 2}2}\le c_2 |A_n|.
	\end{equation}
	Comparing \eqref{eq:bound1} with \eqref{eq:bound2} we have
	\begin{equation*}
	\rho_n^{\frac{\a + 2}2}\mbox{log}\left (1+\frac 1\rho_n\right)\le c_1c_2
	\end{equation*}
	and, since $\a>0$, this contradicts the fact that $\lim_n\rho_n=+\infty$.
\end{proof}

\begin{lemma}\label{le:palaissmale}
	The functional $I_\a$ restricted to $X_r$ satisfies the Palais-Smale condition.
\end{lemma}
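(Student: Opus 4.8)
The plan is to run the three standard steps behind a Palais--Smale verification: boundedness of an arbitrary sequence $(u_n)\subset X_r$ with $I_\alpha(u_n)\to c$ and $I_\alpha'(u_n)\to 0$ in $X_r^*$, extraction of a weak limit with strong subconvergence in Lebesgue spaces, and promotion of weak to strong convergence in $X_r$. The first point is the delicate one. I would note that the computation in Lemma~\ref{th:bound} in fact proves \emph{coercivity} of $I_\alpha$ on $X_r$, not merely boundedness from below: the contradiction argument there applies verbatim to any sequence along which $\|u_n\|\to+\infty$ while $I_\alpha(u_n)$, hence through $I_\alpha\ge H$ and \eqref{eq:estimateonH} also $H(u_n)$, stays bounded above. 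Indeed \eqref{eq:estimateonH} gives $\tfrac18\|u_n\|^2\le H(u_n)+|m|\,|A_{u_n}|$, forcing $|A_{u_n}|\to+\infty$ and $\|u_n\|^2\lesssim|A_{u_n}|$, $V_2(u_n)\lesssim|A_{u_n}|$; then the estimates \eqref{eq:bound1}--\eqref{eq:bound2} on $\rho_n=\sup_{A_{u_n}}|x|$ produce a contradiction because $\alpha>0$. Since $I_\alpha(u_n)\to c$ is bounded, $(u_n)$ is therefore bounded. This coercivity route is forced upon us: as $V_0$ is sign-indefinite, the usual combination $4I_\alpha(u_n)-\langle I_\alpha'(u_n),u_n\rangle=\|u_n\|^2-\bigl(1-\tfrac4p\bigr)\|u_n\|_p^p$ carries a wrong-sign $L^p$ term for $p>4$ and cannot by itself control the norm.

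Being bounded in the Hilbert space $X_r$, up to a subsequence $u_n\rightharpoonup u$ in $X_r$; by the coerciveness of the weight $1+|x|^\alpha$ the embedding $X_r\hookrightarrow L^r(\R^2)$ is compact for every $r\ge 2$ (as already used in Lemma~\ref{le:weaklycontinuous}), so $u_n\to u$ strongly in $L^r(\R^2)$ for all $r\ge2$. Next I would show $I_\alpha'(u)=0$ by passing to the limit termwise in $\langle I_\alpha'(u_n),\varphi\rangle\to0$ for fixed $\varphi\in X_r$: the gradient and weighted-mass terms converge by weak convergence in $X_r$, the term $\int_{\R^2}|u_n|^{p-2}u_n\varphi$ converges by strong $L^p$ convergence, and the coupling term $\langle V_0'(u_n),\varphi\rangle\to\langle V_0'(u),\varphi\rangle$ follows from exactly the estimates in the proof of Lemma~\ref{le:weaklycontinuous} (split $V_0=V_1-V_2$, bound $\log(2+|x-y|)$ by $\log(2+|x|)+\log(2+|y|)$ and absorb the growth through \eqref{eq:wc2} into the $\|\cdot\|_*$-norm on the $u_n$-slots, the fixed $\varphi$ supplying the remaining factor), now applied to the trilinear form in $u_n$ tested against $\varphi$. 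In particular $\|u\|^2-qV_0(u)+\|u\|_p^p=0$.

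For the final step I would test with $u_n$: boundedness of $\|u_n\|$ gives $\langle I_\alpha'(u_n),u_n\rangle\to0$, that is $\|u_n\|^2+\|u_n\|_p^p=qV_0(u_n)+o(1)$. By Lemma~\ref{le:weaklycontinuous} we have $V_0(u_n)\to V_0(u)$, while $\|u_n\|_p^p\to\|u\|_p^p$ by strong $L^p$ convergence, so $\|u_n\|^2\to qV_0(u)-\|u\|_p^p=\|u\|^2$, the last identity being the critical-point relation from the previous step. Since $u_n\rightharpoonup u$ in $X_r$ and $\|u_n\|\to\|u\|$, we conclude $\|u_n-u\|^2=\|u_n\|^2-2\langle u_n,u\rangle+\|u\|^2\to0$, i.e. $u_n\to u$ strongly in $X_r$.

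I expect the main obstacle to be exactly the boundedness step: the indefinite sign of the convolution energy $V_0$ removes the standard Nehari/Ambrosetti--Rabinowitz manipulation, so coercivity must be extracted from the level-set measure argument of Lemma~\ref{th:bound}. Once boundedness is secured, the remaining steps are routine, the only care being the termwise convergence of the coupling term, which is precisely what Lemma~\ref{le:weaklycontinuous} supplies.
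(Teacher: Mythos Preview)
Your proof is correct, and the overall architecture matches the paper's; the only genuine difference is in how boundedness of the Palais--Smale sequence is obtained.

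The paper does \emph{not} argue coercivity of $I_\alpha$ on $X_r$ directly. Instead it uses the derivative information $\langle I_\alpha'(u_n),u_n\rangle\le\|u_n\|$ together with the same lower estimate from Lemma~\ref{th:bound} applied to the quantity $\|u_n\|^2-qV_0(u_n)+\|u_n\|_p^p$, to deduce
\[
\tfrac14\|u_n\|^2+qV_2(u_n)+\int_{\R^2}\Bigl(\tfrac12 u_n^2-C_\eps|u_n|^{\frac{4(\beta-1)}{\beta-2}}+|u_n|^p\Bigr)\,dx\le\|u_n\|,
\]
and then, if $\|u_n\|\to\infty$, subtracts $\tfrac18\|u_n\|^2$ from both sides to land on exactly the inequality \eqref{eq:estimateonH} with right-hand side $\le 0$, after which the level-set argument of Lemma~\ref{th:bound} is invoked verbatim. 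Your route is cleaner: you observe that the contradiction argument in Lemma~\ref{th:bound} works as soon as $H(u_n)$ is bounded \emph{above} (not only when $H(u_n)\to-\infty$), since \eqref{eq:estimateonH} still forces $|A_{u_n}|\to\infty$ and $\|u_n\|^2,\ V_2(u_n)\lesssim|A_{u_n}|$ with merely shifted constants. This yields coercivity of $I_\alpha$ on $X_r$, so the boundedness of a PS sequence follows from $I_\alpha(u_n)\to c$ alone, without ever using $I_\alpha'(u_n)\to 0$ at this stage. The gain is conceptual economy; the paper's version has the minor advantage that it recycles the computations of Lemma~\ref{th:bound} without reopening the constants. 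The strong-convergence step (weak limit, Lemma~\ref{le:weaklycontinuous}, norm convergence) is essentially identical in both arguments.
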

\begin{proof}
	Assume that $(u_n)_n$ is a Palais-Smale sequence for the functional ${I_\a}_{|X_r}$. In particular, we have that, for any $n$ large enough, 
	
	$$\langle I'_\a(u_n),u_n\rangle\le|\langle I'_\a(u_n),u_n\rangle|\le \|u_n\|$$
	that is
	\begin{equation}\label{ps}
	\|u_n\|^2 - q V_0(u_n) + \|u_n\|_p^p\le \|u_n\|.
	\end{equation}
	On the other hand, making computations similar to those in Lemma \ref{th:bound}, we have that for any $\eps>0$ there exists $C_\eps>0$ such that
	\begin{align}\label{pss}
	&\|u_n\|^2 - q V_0(u_n) + \|u_n\|_p^p\\
	&\qquad\qquad\ge \ird |\n u_n|^2\, dx + \left(\frac 12 -\frac{2q\eps}{\pi\beta\log 2}\right) \ird (1+|x|^\a)u_n^2\,dx\nonumber\\
	&\qquad\qquad \qquad +qV_2(u_n)+ \ird \left(\frac 12 u^2- C_\eps |u_n|^{\frac{4(\b-1)}{\b-2}}+|u_n|^p \right)  dx.\nonumber
	\end{align}

	Comparing \eqref{ps} and \eqref{pss}, for a suitable choice of $\eps>0$, we obtain 
	\begin{equation*}
	\frac 14 \|u_n\|^2+q V_2(u_n) + \ird \left(\frac 12 u^2- C_\eps |u_n|^{\frac{4(\b-1)}{\b-2}}+|u_n|^p \right)  dx\le \|u_n\|.
	\end{equation*}
	We claim that $(\|u_n\|)_n$ is bounded. Otherwise, for any large value of $n$, we would have
	\begin{equation*}
	\frac 18 \|u_n\|^2 + qV_2(u_n) +\ird \left(\frac 12 u^2- C_\eps |u_n|^{\frac{4(\b-1)}{\b-2}}+|u_n|^p \right)  dx\le \|u_n\|-\frac 18 \|u_n\|^2\le 0
	\end{equation*}
	which leads to a contradiction by the same arguments developed in the proof of Lemma \ref{th:bound} from inequality \eqref{eq:estimateonH} on. Now, as usual, we extract a subsequence, relabeled in the same way, weakly convergent to $\bar u\in X_r$. At this point we can conclude as usual, once we have observed that
	\begin{align*}
	\|u_n\|^2 &= q V_0(u_n) - \|u_n\|_p^p + o_n(1)\\
	&= q V_0(\bar u) - \|\bar u\|_p^p + o_n(1)\\
	&= \|\bar u\|^2 + o_n(1)
	\end{align*}
	where we have used 
	$$\langle I'_\a(u_n),u_n\rangle=o_n(1)$$
	in the first equality, compactness in the second and
	\begin{equation*}
	0=\lim_n \langle I'_\a(u_n),\bar u\rangle=\langle I'_\a(\bar u), \bar u\rangle
	\end{equation*}
	in the third.
\end{proof}
\begin{theorem}\label{main}
	Let $\a>\tilde p$ and $q\ge \tilde q$. Then the system \eqref{eq:e1} has both a minimizer (at a negative level) and a mountain pass solution on the set of radial functions. 
	
	Moreover, there exists a nonincreasing sequence $(q_n)_{n\ge 1}$ such that, for any $n\ge 1$, system \eqref{eq:e1}  possesses at least $n$ couples of radial solutions $(\pm u_{k,q})$, $k=1,\ldots, n$, such that $I_\a(u_{k,q})<0$ and $n$ couples of radial solutions $(\pm v_{k,q})$, $k=1,\ldots, n$, such that $I_\a(v_{k,q})>0$, for any $q\ge q_n$.
\end{theorem}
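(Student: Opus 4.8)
The plan is to work throughout on the natural constraint $X_r$, where by hypothesis $q\ge\tilde q$, using that $I_\a$ is even, is bounded below with $\inf_{X_r}I_\a<0$ (Lemma~\ref{th:bound}), and satisfies the Palais--Smale condition (Lemma~\ref{le:palaissmale}). From these ingredients I would extract four families of critical points: a global minimizer, a mountain pass point, and, for $q$ large, $n$ symmetric pairs below and $n$ above the level $0$. At the very end, the fact that $X_r$ is a natural constraint turns each critical point of $I_\a|_{X_r}$ into a critical point on all of $X$, i.e.\ a radial solution of \eqref{eq:e1e}.

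\emph{Minimizer and mountain pass.} Let $c:=\inf_{X_r}I_\a\in(-\infty,0)$. Taking a minimizing sequence and refining it, via Ekeland's principle, to a minimizing Palais--Smale sequence, Lemma~\ref{le:palaissmale} yields a strongly convergent subsequence whose limit $\bar u$ satisfies $I_\a(\bar u)=c<0=I_\a(0)$; hence $\bar u\neq0$ is a radial solution at a negative level. For the second solution I would exhibit the mountain pass geometry: combining \eqref{ineq:V1} with $\|u\|_2,\|u\|_*\le\|u\|$ and $V_2\ge0$ gives $I_\a(u)\ge\tfrac12\|u\|^2-\tfrac{qC_\a}{4\pi}\|u\|^4+\tfrac1p\|u\|_p^p$, so for $\rho>0$ small there is $\delta>0$ with $I_\a\ge\delta$ on $\{\|u\|=\rho\}$ and $I_\a>0$ on $\overline{B}_\rho\setminus\{0\}$; in particular $\|\bar u\|>\rho$. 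Thus $0$ and $\bar u$ sit in different valleys, and since Palais--Smale holds, the minimax level $c_{mp}\ge\delta>0$ is a critical value, producing a radial solution at a positive level, distinct from $\bar u$.

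\emph{The negative pairs.} Here I would use the $\mathbb{Z}_2$-symmetry. Fix $n$ and construct an $n$-dimensional $E_n\subset X_r$ on which $V_0$ is positive: choose radial bumps $\phi_1,\dots,\phi_n$ with pairwise disjoint supports in annuli pushed far from the origin, so that every $a_{ij}=\tfrac1{2\pi}\iint\log|x-y|\,\phi_i^2(x)\phi_j^2(y)\,dx\,dy$ is positive (for the diagonal terms one checks that the set $\{|x-y|<1\}$, where $\log$ is negative, carries negligible mass once the annuli are far out). Since the supports are disjoint, $V_0(u)=\sum_{i,j}c_i^2c_j^2a_{ij}>0$ for every $u=\sum_ic_i\phi_i\neq0$. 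On the compact unit sphere of $E_n$ one then has $V_0\ge c_0>0$ while $\|\cdot\|_p$ stays bounded, so for a fixed scaling $t_*>\rho$ and for $q\ge q_n$ large the genus-$n$ set $A_n=\{t_*u:\ u\in E_n,\ \|u\|=1\}$ satisfies $\sup_{A_n}I_\a<0$. The Krasnoselskii genus minimax values $c_k=\inf\{\sup_{A}I_\a:\ A\subset X_r\setminus\{0\}\ \text{compact, symmetric},\ \gamma(A)\ge k\}$, $k=1,\dots,n$, then obey $c\le c_k\le\sup_{A_n}I_\a<0$; by the standard equivariant deformation argument under Palais--Smale each $c_k$ is a critical value and they yield at least $n$ pairs $\pm u_{k,q}$ with $I_\a(u_{k,q})<0$.

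\emph{The positive pairs, and the main obstacle.} Producing the critical points above level $0$ is, I expect, the hard part. Because $p>4$, the term $\tfrac1p\|u\|_p^p$ dominates along every ray, so $I_\a\to+\infty$ on each finite-dimensional subspace; the classical symmetric mountain pass and fountain theorems, which require $I_\a\to-\infty$ there, are therefore unavailable. Inside each $E_n$ the negative set is instead annular: $I_\a>0$ near $0$, $I_\a<0$ on $A_n$, and $I_\a>0$ again far out. I would accordingly set up an equivariant, Benci-type pseudo-index linking between the inner sphere $\{\|u\|=\rho\}$, where $I_\a\ge\delta$, and the outer sphere $A_n$, where $I_\a<0$: deforming the solid ball $\overline{B}_{t_*}\cap E_n$ by odd maps that fix its boundary $A_n$, a Borsuk--Ulam argument forces every image to meet $\{\|u\|=\rho\}$, so each element of the minimax family carries values $\ge\delta$. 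The associated pseudo-index levels $b_1\le\dots\le b_n$, lying in $[\delta,\sup_{\overline{B}_{t_*}\cap E_n}I_\a]$, are then critical values under Palais--Smale and give $n$ further pairs $\pm v_{k,q}$ with $I_\a(v_{k,q})>0$. This is the two-sided scheme of \cite{ARu}; the delicate points are the construction of $E_n$ with $V_0>0$ and checking that the linking survives the absence of any $-\infty$ behaviour of $I_\a$. Collecting the thresholds into the sequence $(q_n)$ and transferring criticality from $X_r$ to $X$ finishes the argument.
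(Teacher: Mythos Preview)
Your minimizer and mountain pass arguments match the paper's. The differences lie in the multiplicity part, and both concern the construction of the genus-$n$ set with $I_\a<0$.

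For the negative pairs, the paper avoids your special subspace $E_n$ by using \emph{dilation} instead of amplitude scaling: for any $u\in X_r\setminus\{0\}$ and $u_t=u(\cdot/t)$ one has
\[
I_\a(u_t)=\tfrac12\|\nabla u\|_2^2+\tfrac{t^2}{2}\ird(1+t^\a|x|^\a)u^2-\tfrac{q t^4\log t}{8\pi}\|u\|_2^4-\tfrac{q t^4}{4}V_0(u)+\tfrac{t^2}{p}\|u\|_p^p,
\]
so the term $-\tfrac{q t^4\log t}{8\pi}\|u\|_2^4$ dominates once $t$ is fixed large (so that $m_2\log t-M_3>0$ uniformly on the unit sphere of \emph{any} $n$-dimensional subspace $X_n\subset X_r$) and $q$ is then taken large. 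The odd map $\tau(u)=u_{\bar t}$ sends $S_n$ to a genus-$n$ set with $\sup I_\a<0$, and \cite[Corollary~2.24]{AR} gives the $n$ negative critical values. This bypasses your construction of far-out annular bumps guaranteeing $V_0>0$ on $E_n$; your argument is correct but does extra work.

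For the positive pairs, what you call ``the main obstacle'' is in fact covered by a classical abstract result. The paper simply invokes \cite[Theorem~2.23]{AR}, whose hypotheses are precisely: $I_\a$ even, $C^1$, bounded below, satisfying Palais--Smale, with $I_\a(0)=0$ a strict local minimum, plus the condition $I_7$ that for each $n$ there is an odd continuous map from $S^{n-1}$ into $X_r$ whose image lies in $\{I_\a<0\}$. The same dilation trick verifies $I_7$, and the theorem outputs $n$ positive critical values. Your pseudo-index linking sketch would reprove this theorem; it is not wrong, but the ``absence of any $-\infty$ behaviour'' is not a genuine obstacle here---the relevant variant of the symmetric mountain pass for bounded-below functionals has been available since \cite{AR}.
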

\begin{proof}
	Using the Ekeland variational principle, the existence of a minimizer follows directly from Lemma \ref{th:bound} and Lemma \ref{le:palaissmale} by a well known variant of Weierstrass Theorem. 
	
	To show that there exists a mountain pass solution, by Lemma \ref{le:palaissmale}, we only have to check the geometrical conditions of Ambrosetti-Rabinowitz mountain pass Theorem.\\ 
	Observe that, if we take any $u\in X_r$ such that $\|u\|=\rho$ with $\rho>0$ small enough, then by \eqref{embeddingX} and \eqref{ineq:V1} 
	\begin{align*}
	I_\a(u) &\ge  \frac{1}{2}\|u\|^2 - C_1 \|u\|_2^2\|u\|_*^2 + \frac 1 p \|u\|_p^p\\
	&\ge \frac{1}{2}\|u\|^2 - C_1 \|u\|^4 =\rho^2 \left(\frac 12 -C_1\rho^2\right)\\
	&\ge \frac{\rho^2}4.
	\end{align*} 
	Moreover the second mountain pass geometric condition is trivially satisfied by the minimizer $u_0$.
	
	In order to prove the second part of the theorem, we will use abstract results of \cite{AR}, proceeding in a similar way as in \cite[Theorem 3.1]{ARu}.\\
	First we look for solutions at negative levels of the functional. Consider an increasing (in the sense of inclusion) sequence $(X_n)_n$, such that for any $n\ge 1:$ $X_n$ is an $n-$dimensional subspace of $X_r$.   Set $S=\{u\in X_r;\|u\|=1\}$ and $S_n=X_n\cap S$ for any $n\ge 1$.
	
	We define
	\begin{equation*}
	\begin{array}{lll}
	&M_1=\displaystyle\max_{u\in S_n}\frac 12 \ird (|\n u|^2+(1+|x|^\a)u^2)\, dx,\quad &m_2=\displaystyle \min_{u\in S_n}\frac 1{8\pi}\left(\ird u^2\, dx\right)^2 ,\\
	&\displaystyle M_3=\max_{u\in S_n}\frac 14 |V_0(u)|,\quad &M_4=\displaystyle\max_{u\in S_n}\frac 1 p\ird |u|^p\, dx,
	\end{array}
	\end{equation*}
	and, for all $u\in X_r$, set $u_t=u(\cdot/t)$. Consider $\bar t>1$ such that $ m_2\log \bar t  - M_3 >0$ :
	\begin{align}\label{eq:mult}
	I_\a(u_{\bar t})&= \frac12\|\n u\|_2^2 + \frac{\bar t^2}2\ird (1+\bar t^\a|x|^a)u^2\, dx\\ 
	&\qquad - \frac{q\bar t^4\log\bar t}{8\pi}\left(\ird u^2\, dx \right)^2 - \frac{q\bar t^4}{4} V_0(u) + \frac{\bar t^2}{p}\ird |u|^p\, dx\nonumber\\
	&\le M_1\bar t^{2+\a} - q( m_2\log \bar t  - M_3 )\bar t^4  + M_4\bar t^2.\nonumber
	\end{align}
	We take $q^{(1)}_n>0$ such that $M_1\bar t^{2+\a} - q^{(1)}_n( m_2\bar t^4\log \bar t  - M_3 \bar t^4)  + M_4\bar t^2<0$ and define $\tau: X_r\to X_r$ such that 	$\tau (u)=u_{\bar t}$. Of course, $\tau$ is continuous and odd and then by well known properties coming from index theory,  we  have $	\g(\tau(S_n))\ge n$, where $\g$ denotes the Krasnoselski genus.
	
	Now take $q\ge q^{(1)}_n$. By \eqref{eq:mult} we deduce that, for any $k=1,\ldots, n$,
	\begin{equation*}
	d_k = \inf_{\g (A)\ge k \atop A\in\Sigma (X_r)}\sup\{I_\a(u);u\in A\} <0
	\end{equation*}
	where $\Sigma (X_r)$ represents the set of closed subsets of $X_r\setminus\{0\}$ which are symmetric with respect to the origin. Since $I_\a$ satisfies the Palais-Smale condition and $I_\a$ is bounded from below, by \cite[Corollary 2.24]{AR} every value $d_k$ is critical, and there exists at least $n$ couples of solutions, some of them possibly at the same level. \\
	As for the (at least) $n$ couples of solutions at positive levels, we just have to verify   condition $I_7$ to apply \cite[Theorem 2.23]{AR}.
	Consider $n\ge 1$ and  $u_1,\ldots, u_n\in X_r$ linearly independent. Define $S^{n-1}=\{\sigma\in\R^n; |\sigma|=1\}$  and for any $\sigma=(\sigma_1,\ldots,\sigma_n) \in S^{n-1}$ set $$u_\sigma=\sum_{i=1}^n\sigma_iu_i.$$ 
	Moreover, we define
	\begin{equation*}
	\begin{array}{lll}
	&M_1=\displaystyle\max_{\sigma\in S^{n-1}}\frac 12 \ird (|\n u_\sigma|^2+(1+|x|^\a)u_\sigma^2)\, dx,\quad &m_2=\displaystyle \min_{\sigma\in S^{n-1}}\frac 1{8\pi}\left(\ird u_\sigma^2\, dx\right)^2 ,\\
	&\displaystyle M_3=\max_{\sigma\in S^{n-1}}\frac 14 |V_0(u_\sigma)|,\quad &M_4=\displaystyle\max_{\sigma\in S^{n-1}}\frac 1 p\ird |u_\sigma|^p\, dx,
	\end{array}
	\end{equation*}
	and $u_{\sigma t}= u_\sigma(\cdot/ t)$. Consider $\tilde t>1$ such that $ m_2\log \tilde t  - M_3 >0$  and take $q^{(2)}_n>0$ such that
	\begin{align*}
	I_\a(u_{\sigma \tilde t})&= \|\n u_\sigma\|_2^2 + \frac{\tilde t^2}2\ird (1+\tilde t^\a|x|^a)u_\sigma^2\, dx\\ 
	&\qquad - \frac{q^{(2)}_n\tilde t^4\log t}{8\pi}\left(\ird u_\sigma^2\, dx \right)^2 - \frac{q^{(2)}_n\tilde t^4}{4} V_0(u_\sigma) + \frac{\tilde t^2}{p}\ird |u_\sigma|^p\, dx\\
	&\le M_1\tilde t^{2+\a} - q^{(2)}_n(m_2\log t  - M_3) \tilde t^4  + M_4\tilde t^2<0.
	\end{align*}
	Observe that for any $q\ge q^{(2)}_n$, the set $A\{u_\s\in X_r; \s\in S^{n-1}\}$ verifies $I_7$ of \cite[Theorem 2.23]{AR}.
	
	We conclude our proof taking $q_n=\max \{q^{(1)}_n, q^{(2)}_n\}$ for all $n\ge 1$. 
\end{proof}
	\begin{remark}\label{re:bound}
		Observe that the minimizing argument, and in particular the proof of Lemma \ref{th:bound}, can not be repeated excluding the property of radiality. In \cite{Ruiz2} it was showed that radiality assumption is not just technical in the three dimensional case, since the author proved that, on the other hand, the unconstrained functional turns out to be unbounded from below. Unfortunately, differently from \cite{Ruiz2}, we are not able to obtain an analogous result in our situation, so that the question about global boundedness and existence of a global minimizer remains an interesting open problem. 
	\end{remark}

\subsection{Existence of a nonradial solution by constrained minimization}

It is worthy of note that the arguments previously used to prove the existence of solutions work by the assumption of radial symmetry on the functional framework, necessary to exploit Strauss estimate. To complete our study on problem \eqref{eq:e1}, we are going to show the existence of a nonradial solution.

Unfortunately, since we will proceed by means of constrained minimization, the result we present is weaker than Theorem \ref{main}, due to the fact that we treat $q$ as an unknown of the problem, coming in the form of a Lagrange multiplier.

First of all, in order to prevent our solution to be radial, we change our functional framework from $X_r$ to 
\begin{equation*}
\tilde X = \{u\in X ; u(-x_1,x_2)=-u(x_1,x_2) \hbox{ and } u(x_1,-x_2)=u(x_1,x_2),\, \forall (x_1,x_2)\in\RD\}.
\end{equation*}
Such a space has been introduced in \cite{DuWeth} in order to prove the existence of nonradial solutions to the planar Schr\"odinger-Poisson system. By the well known Palais principle of symmetric criticality, $\tilde X$ is a natural constraint. We are going to prove our result in a more general setting. Indeed, assume that $W:\R\to\R$ satisfies the following assumptions
\begin{itemize}
	\item [$W1)$] $W=W(s)\in C^1(\R)$,
	\item [$W2)$] $W$ is nonnegative,
	\item [$W3)$] there exist $p>2$, $C_1$ and  $C_2$ positive constants such that $W(s)\le C_1s^2+C_2|s|^p$,
\end{itemize}

\begin{theorem}\label{main2}
	Assume $W1, W2$ and $W3$ and consider the problem
	\beq \label{eW}\tag{${\mathcal{P}}_{W}$}
	\left	\{
	\begin{array}{l}
		-\Delta u +(1+|x|^\a)u -q \phi u +W'(u)=0\hbox{ in } \R^2,	\\
		\Delta \phi = u^2 \hbox{ in } \R^2,
	\end{array}
	\right.
	\eeq 
	Then for any $\a>0$ there exists $q >0$ such that $\eqref{eW}$ has a  nonradial solution. Such a solution is odd with respect to the first variable and even with respect to the second.
\end{theorem}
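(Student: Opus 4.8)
The plan is to read \eqref{eW} as a nonlinear eigenvalue problem and to produce $q$ as a (positive) Lagrange multiplier from a constrained minimization over the symmetric space $\tilde X$. I would set
$$
\Phi(u)=\frac12\|u\|^2+\ird W(u)\,dx,
$$
and minimize $\Phi$ on the manifold $\M=\{u\in\tilde X:\ V_0(u)=1\}$. A constrained critical point $\bar u$ satisfies $\Phi'(\bar u)=\mu V_0'(\bar u)$; since $V_0'(u)=4(\Phi_2\ast u^2)u$, this is precisely \eqref{eW} with $q=4\mu$, and by the Palais principle of symmetric criticality the identity passes from $\tilde X$ to all of $X$. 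Observe that nonradiality comes for free: a radial function that is odd in $x_1$ vanishes identically, so any nonzero element of $\tilde X$ is automatically nonradial, odd in $x_1$ and even in $x_2$. Thus everything reduces to solving the minimization and to showing $\mu>0$.

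First I would check $\M\neq\emptyset$. For any nonzero $u\in\tilde X$, the dilation $u_t=u(\cdot/t)$ obeys the logarithmic scaling law
$$
V_0(u_t)=t^4V_0(u)+\frac{t^4\log t}{2\pi}\|u\|_2^4,
$$
so $V_0(u_t)>0$ for $t$ large; the homogeneity $V_0(\lambda u)=\lambda^4V_0(u)$ then lets me rescale the amplitude to land on $\M$. The minimization itself is the routine part. Since $W\ge0$ by W2, one has $\Phi(u)\ge\frac12\|u\|^2\ge0$, so $c:=\inf_\M\Phi\ge0$ and every minimizing sequence $(u_n)$ is bounded in $X$. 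Because the coercive weight makes $X\hookrightarrow L^r(\RD)$ compact for all $r\ge2$, up to a subsequence $u_n\weakto\bar u$ in $X$ with $u_n\to\bar u$ strongly in each $L^r$; as $\tilde X$ is weakly closed, $\bar u\in\tilde X$. The weak continuity of $V_0$ (Lemma \ref{le:weaklycontinuous}) gives $V_0(\bar u)=1$, hence $\bar u\in\M$ and in particular $\bar u\neq0$; the growth bound W3 together with strong $L^2$- and $L^p$-convergence yields $\ird W(u_n)\,dx\to\ird W(\bar u)\,dx$, while $\|\cdot\|^2$ is weakly lower semicontinuous, so $\Phi(\bar u)\le\liminf_n\Phi(u_n)=c$ and $\bar u$ realizes the minimum.

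The heart of the matter, and the step I expect to be the main obstacle, is the sign of the multiplier: I need $\mu>0$. This is invisible from the Nehari identity $4\mu=\|\bar u\|^2+\ird W'(\bar u)\bar u\,dx$ alone, since W1--W3 give no control on the sign of $\ird W'(\bar u)\bar u\,dx$. I would instead use a dilation (Pohozaev) identity: pairing the Euler--Lagrange equation with the generator of dilations $\frac{d}{dt}\bar u_t|_{t=1}=-x\cdot\n\bar u$ (with $\bar u_t=\bar u(\cdot/t)$), and using that $\ird|\n\bar u_t|^2\,dx$ is dilation invariant while $\ird\bar u_t^2\,dx$, $\ird|x|^\a\bar u_t^2\,dx$ and $\ird W(\bar u_t)\,dx$ scale like $t^2$, $t^{2+\a}$ and $t^2$ respectively, together with the logarithmic law above, differentiation at $t=1$ produces
$$
\ird\bar u^2\,dx+\frac{2+\a}{2}\ird|x|^\a\bar u^2\,dx+2\ird W(\bar u)\,dx=\mu\Big(4+\frac{1}{2\pi}\|\bar u\|_2^4\Big).
$$
The left-hand side is strictly positive because $\bar u\neq0$, and the parenthesis on the right is positive, forcing $\mu>0$; hence $q=4\mu>0$ and $\bar u$ is the desired nonradial solution. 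The only points still to be justified are that $t\mapsto\bar u_t$ is a $C^1$ curve in $X$ with $-x\cdot\n\bar u\in X$ an admissible test direction --- which follows from elliptic regularity and the decay forced by the coercive weight --- and the continuity of $u\mapsto\ird W(u)\,dx$ under W3.
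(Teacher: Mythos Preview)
Your proposal is correct and follows the same overall strategy as the paper: minimize $\Phi$ (the paper calls it $G_\alpha$) on the manifold $\{u\in\tilde X:\ V_0(u)=1\}$, use weak continuity of $V_0$ and compactness of $X\hookrightarrow L^r$ to produce a minimizer, then recover $q$ as (four times) the Lagrange multiplier.

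The only substantive difference lies in how the sign of the multiplier is obtained. You derive it in one stroke from the Pohozaev identity for the \emph{coupled} equation, which is clean but requires the regularity/decay justification you flag (so that $x\cdot\nabla\bar u$ is an admissible test direction, or equivalently so that the Pucci--Serrin cutoff argument goes through with the logarithmic convolution term). The paper instead proceeds in two purely variational steps: first rule out $\mu=0$ via the Pohozaev identity for the \emph{uncoupled} equation $-\Delta u+(1+|x|^\alpha)u+W'(u)=0$; then rule out $\mu<0$ by a Berestycki--Lions trick --- perturb to $(1+\varepsilon)\bar u$ (which has $V_0>1$ and $\Phi<\Phi(\bar u)$ since $\langle\Phi'(\bar u),\bar u\rangle=4\mu<0$), and dilate back onto the constraint via $\bar v(\cdot/\sigma)$ with $\sigma\in(0,1)$, lowering $\Phi$ further. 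The advantage of the paper's argument for $\mu<0$ is that it works entirely at the level of the functional and sidesteps the regularity issue for the convolution term; your single Pohozaev identity is shorter and more transparent once that technical point is granted.
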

\begin{proof}
	Let us consider the functional 
	\begin{equation*}
	G_\alpha(u) = \frac 1 2 \ird |\n u|^2\, dx  + \frac 1 2 \ird (1+|x|^\a)u^2\, dx +\ird W(u)\, dx
	\end{equation*}
	constrained to the set
	\begin{equation}
	H:=\left\{u\in  \tilde X; \, V_0(u)=1\right\}.
	\end{equation}
	It is straightforward to see that $G_\alpha$ is a $C^1$ functional in $\tilde X$. Also, $H$ is a $C^1$ manifold in $X$, since for every $u\in H$, by \cite[Lemma 2.3]{DuWeth}, $V \in C^1(X)$ and also $\langle V'_0(u), u\rangle= 4V_0(u)=4$. Moreover, $H$ is also a non empty set. Indeed, for $u\in X\setminus \{0\}$, we consider $u_t=u(\cdot/t)$ and compute
	\begin{align}\label{eq:rescaling}
	V_0(u_t)&= \frac{1}{2\pi}\int_{\mathbb{R}^2}\int_{\mathbb{R}^2} \mbox{log}(|x - y|)u_t^2(x)u_t^2(y)dx dy\\
	&=\frac{t^4}{2\pi}\int_{\mathbb{R}^2}\int_{\mathbb{R}^2} \mbox{log}(|x - y|)u^2(x)u^2(y)dx dy\nonumber\\
	&\qquad + \frac{t^4\log t}{2\pi}\left(\ird u^2(x)\, dx\right)^2.\nonumber
	\end{align}
	Since 
	\begin{equation}\label{eq:divergence}
	\lim_{t\to 0 }V_0(u_t)=0\hbox{ and } \lim_{t\to+\infty }V_0(u_t)=+\infty,
	\end{equation}
	it follows that there exists $t_0 > 0$ such that $t_0u \in H$. 
	
	Coerciveness of $G$ is straightforward to see, since
	$$
	G_\alpha(u) \geq \frac{1}{2}\|u\|^2.
	$$
	In order to show that there exists $\overline{u} \in H$ such that
	$$
	G_\alpha(\overline{u}) = \inf_{u \in H}G_\alpha(u),
	$$
	let $(u_n) \subset H$ be such that
	\begin{equation}
	\lim_{n \to +\infty}G_\alpha(u_n) =\inf_{u \in H}G_\alpha(u).
	\label{eq:galpha1}
	\end{equation}
	By the coercivity of $G_\alpha$, it follows that $(u_n)$ is bounded in $\tilde X$. Then, there exists $\overline{u} \in \tilde X$ such that, up to a subsequence,
	\begin{equation}
	u_n \rightharpoonup \overline{u} \quad \mbox{in $\tilde X$, as $n \to +\infty$.}
	\label{eq:galpha2}
	\end{equation}
	From Lemma \ref{le:weaklycontinuous}, $\overline{u} \in H$. Also, from the compactness of the embeddings $\tilde X \hookrightarrow L^r(\mathbb{R}^2)$, for all $r \geq 2$, it follows that
	\begin{equation}
	u_n \to \overline{u} \quad \mbox{in $L^r(\mathbb{R}^2)$, as $n \to +\infty$.}
	\label{eq:galpha3}
	\end{equation}
	In particular, 
	\begin{equation}
	\|u_n\|_p^p \to \|\overline{u}\|_p^p, \quad \mbox{as $n \to +\infty$.}
	\label{eq:galpha4}
	\end{equation}
	Hence, it follows that
	\begin{eqnarray*}
		\inf_{u \in H}G_\alpha(u) & \leq & G_\alpha(\overline{u})\\
		& \leq & \liminf_{n \to +\infty}\left(\frac{1}{2}\|u_n\|^2 + \frac{1}{p}\|u_n\|_p^p\right)\\
		& = & \liminf_{n \to +\infty}G_\alpha(u_n)\\
		& = & \inf_{u \in H}G_\alpha(u),
	\end{eqnarray*}	
	from where it follows that $\overline{u}$ is a minimizer of $G_\alpha$ on $H$.			
	
	Then, there exists a Lagrange multiplayer $\lambda \in \R$ such that 
	\begin{equation}\label{eq:lag}
	G_\alpha'(\overline{u})=\l V_0'(\overline{u}).
	\end{equation}
	In order to prove that $\l>0$, we recall an argument used in \cite[Page 327]{BL}.  First observe that $\l\neq 0$. Indeed, if we assume $\l=0$, then $\bar u$ would be a nontrivial solution of the equation
		\begin{equation*}
				-\Delta u +(1+|x|^\a)u  +W'(u)=0\hbox{ in } \R^2,
		\end{equation*} 
	However, by standard arguments, we could show that $\bar u$ satisfies the following Pohozaev identity
		\begin{equation*}
			\|\bar u\|_2^2+\left(\frac{2+\a} 2\right)\ird |x|^\a \bar u^2\,dx+ 2\ird W(\bar u)\, dx=0
		\end{equation*}
	which, by assumption $W2$, is satisfied only by $\bar u=0$.\\
	So, assume by contradiction that $\l<0$. Then, since $\langle V'_0(\bar u),\bar u\rangle>0$, by \eqref{eq:lag} certainly $\langle G'_\a(\bar u),\bar u\rangle<0$. Choosing $\eps>0$ small enough, we would have
		\begin{align*}
			V_0((1+\eps)\bar u)  &= V_0(\bar u) + \eps \langle V'_0(\bar u),\bar u\rangle + o(\eps) >V_0(\bar u)\\
			G_\a((1+\eps)\bar u)  &= G_\a(\bar u) + \eps \l  \langle V'_0(\bar u),\bar u\rangle + o(\eps) <G_\a(\bar u)\\
		\end{align*} 
	So we deduce that there exists $\bar v\in \tilde X$, with $\bar v \neq 0$, such that $ V_0(\bar v) > V_0(\bar u)=1$ and $G_\a(\bar v) < G_\a (\bar u)=\inf_{u \in H}G_\alpha(u)$. This immediately leads to a contradiction, since there exists $\s \in (0,1)$  such that $\tilde v = \bar v(\cdot/\s)\in H$ (see \eqref{eq:rescaling}, and
		\begin{align*}
			G_\a(\tilde v)&= \frac 12 \ird |\n \bar v|^2\, dx +\frac{\s ^2}2 \ird (1+|\s x|^\a)\bar v ^2\, dx + \s^2\ird W(\bar v)\, dx\\
			&<G_\a(\bar v)<\inf_{u \in H}G_\alpha(u).
		\end{align*}
	Taking $q= 4 \l$, we get a solution of \eqref{eq:e1}. 
	
\end{proof}

	\section{Existence of solution for \eqref{eq:e2}}\label{sec3}
	
	In this section, we use variational methods to find solutions of \eqref{eq:e2} (or \eqref{eq:e2e}, equivalently). Our final result is the following

	\begin{theorem}
		\label{teo2}
		If $\alpha > 0$ and either $2 < p < 3$ or $p \ge 4$, then \eqref{eq:e2} possesses infinitely many solutions.
	\end{theorem}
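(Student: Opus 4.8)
The plan is to produce infinitely many critical points of the even $C^1$ functional $J_\alpha$ on $X$ by the same symmetric minimax scheme used in Theorem \ref{main}, so that the entire novelty is concentrated in the Palais--Smale condition. The compactness is already at hand: since $1+|x|^\alpha$ is coercive, the embeddings \eqref{embeddingX} are in fact compact for every $r\ge 2$, and $V_0$ is weakly continuous by Lemma \ref{le:weaklycontinuous}. Thus, starting from a \emph{bounded} Palais--Smale sequence $(u_n)$, I would pass to a weak limit $\bar u$, upgrade to strong convergence in all $L^r(\RD)$, and combine $V_0(u_n)\to V_0(\bar u)$ with $\langle J_\alpha'(u_n),u_n\rangle=o_n(1)$ and $\langle J_\alpha'(u_n),\bar u\rangle=o_n(1)$ to get $\|u_n\|\to\|\bar u\|$, hence $u_n\to\bar u$ strongly in the Hilbert space $X$. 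In other words, the Palais--Smale condition reduces exactly to boundedness of Palais--Smale sequences.

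The geometry and the multiplicity of solutions I would handle as in Theorem \ref{main}. Near the origin, using $V_2\ge 0$, the bound \eqref{ineq:V1}, and \eqref{embeddingX}, one has $J_\alpha(u)\ge \tfrac12\|u\|^2-C\|u\|^4-C'\|u\|^p\ge \tfrac14\|u\|^2$ on a small sphere $\|u\|=\rho$, giving the first geometric condition. For the behaviour at infinity the rescaling $u_t=u(\cdot/t)$ of \eqref{eq:rescaling} is decisive, since the term $-\tfrac{q\, t^4\log t}{8\pi}\big(\ird u^2\big)^2$ has a fixed negative sign and dominates all the others; hence $J_\alpha(u_t)\to-\infty$ as $t\to+\infty$ for every fixed $u\neq 0$ and \emph{any} $q>0$. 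Applying this to an odd continuous rescaling map on the unit sphere of each $n$-dimensional subspace of $X$ produces, exactly as in the proof of Theorem \ref{main}, symmetric sets of Krasnoselskii genus at least $n$ on which $J_\alpha$ is negative; feeding these into the index-theoretic minimax of \cite{AR} (as in \cite[Theorem 3.1]{ARu}) yields infinitely many pairs of solutions, once the Palais--Smale condition holds.

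Everything therefore comes down to boundedness, and here the two ranges behave very differently. For $p\ge 4$ the standard estimate suffices: from $J_\alpha(u_n)\to c$ and $\langle J_\alpha'(u_n),u_n\rangle=o_n(\|u_n\|)$,
\[
c+o_n(\|u_n\|)=J_\alpha(u_n)-\tfrac14\langle J_\alpha'(u_n),u_n\rangle=\tfrac14\|u_n\|^2+\Big(\tfrac14-\tfrac1p\Big)\|u_n\|_p^p\ge \tfrac14\|u_n\|^2,
\]
because $\tfrac14-\tfrac1p\ge 0$, so $(u_n)$ is bounded at once. When $2<p<4$ the coefficient $\tfrac14-\tfrac1p$ turns negative and this argument breaks down completely, which is the difficulty announced in the introduction.

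For $2<p<3$ I would restore boundedness by the contradiction argument of Du and Weth \cite{DuWeth}, suitably adapted. Eliminating $V_0(u_n)$ between the two Palais--Smale relations gives
\[
\Big(\tfrac12-\tfrac1p\Big)\|u_n\|^2+q\Big(\tfrac1p-\tfrac14\Big)V_0(u_n)=c+o_n(1)+o_n(\|u_n\|),
\]
in which both coefficients are positive and the only dangerous contribution is the negative part $-V_2$ of $V_0=V_1-V_2$. Assuming $\|u_n\|\to+\infty$ and normalising $w_n=u_n/\|u_n\|$, the compact embedding gives $w_n\to w$ in every $L^r(\RD)$ and Lemma \ref{le:weaklycontinuous} forces $V_0(w)=0$ together with a sharp decay rate for $V_0(w_n)$; estimating the surviving term through $V_2(u_n)\le D\|u_n\|_{8/3}^4$ from \eqref{ineq:V2} and interpolating the $L^{8/3}$--norm against $\|u_n\|_2$ and $\|u_n\|_p$, the balancing of powers can be pushed through only for $p<3$, contradicting $\|u_n\|\to+\infty$ in that range. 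This quantitative balancing is the hard core of the theorem: once $p\ge 3$ the interpolation no longer yields a decisive gain, the negative part of $V_0$ escapes control, and boundedness — hence existence — remains open in the gap $3\le p<4$ acknowledged in the introduction.
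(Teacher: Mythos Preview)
Your argument has two genuine gaps.

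The geometric step fails for $\alpha>2$. With the rescaling $u_t=u(\cdot/t)$ of \eqref{eq:rescaling} one has
\[
J_\alpha(u_t)=\tfrac12\|\nabla u\|_2^2+\tfrac{t^2}{2}\|u\|_2^2+\tfrac{t^{2+\alpha}}{2}\ird|x|^\alpha u^2\,dx-\tfrac{q t^4}{4}V_0(u)-\tfrac{q t^4\log t}{8\pi}\|u\|_2^4-\tfrac{t^2}{p}\|u\|_p^p,
\]
so the positive term of order $t^{2+\alpha}$ dominates $t^4\log t$ whenever $\alpha>2$, and $J_\alpha(u_t)\to+\infty$ rather than $-\infty$. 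The paper fixes this with the two–parameter rescaling $w_t=t^r w(\cdot/t)$, $r>\tfrac{\alpha-2}{2}$ (Proposition~\ref{pr:mp}), which makes the leading negative term $t^{4r+4}\log t$ beat $t^{2r+2+\alpha}$ for every $\alpha>0$.

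More seriously, your boundedness argument for $2<p<3$ cannot be closed with only the two relations $J_\alpha(u_n)\to c$ and $\langle J_\alpha'(u_n),u_n\rangle=o(\|u_n\|)$. Their combination gives
\[
\Big(\tfrac12-\tfrac1p\Big)\|u_n\|^2+q\Big(\tfrac1p-\tfrac14\Big)V_0(u_n)=c+o(1)+o(\|u_n\|),
\]
but the only a priori control on $V_0=V_1-V_2$ via \eqref{ineq:V1}--\eqref{ineq:V2} is quartic in $\|u_n\|$, which overwhelms the quadratic term; no interpolation of $\|u_n\|_{8/3}$ between $\|u_n\|_2$ and $\|u_n\|_p$ turns this into a coercive inequality. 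The paper obtains a \emph{third} relation: along the path $t\mapsto t^r u(\cdot/t)$ one produces (Proposition~\ref{propMPT}) a Palais--Smale sequence that in addition satisfies a Pohozaev-type identity $P_\alpha(u_n)\to 0$, and $P_\alpha$ contains the term $-\tfrac{q}{8\pi}\|u\|_2^4$ generated by differentiating the $\log t$ factor in $V_0(u_t)$. The combination $J_\alpha(u_n)-\tfrac{1}{4(r+1)}P_\alpha(u_n)$ then carries a \emph{positive} $\|u_n\|_2^4$ on the left (see \eqref{eq:B1}--\eqref{eq:B2}), and it is precisely this extra quartic control, together with Gagliardo--Nirenberg and a Du--Weth rescaling $v_n=t_n^2u_n(t_n\cdot)$, that drives the contradiction for $2<p<3$ in Proposition~\ref{pro:bound}. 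Accordingly the paper never verifies the full Palais--Smale condition on $X$ in this range; multiplicity is obtained instead by running the mountain pass on the Du--Weth constraint spaces $X_k$ and showing the levels $c_{\alpha,3^h}$ are unbounded, rather than via the genus scheme you propose.
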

	
	 In order to get it, let us show that the functional $J_\a$ associated to \eqref{eq:e2e}, satisfies the following geometrical properties
	
	\begin{proposition}\label{pr:mp}
	Take $r>\frac{\a-2}{2}$, $w\in X\setminus\{0\}$ and for all $t>0$ set $w_t(\cdot)=t^r w(\cdot/t).$ Then there exist $\rho, \beta , \bar t> 0$  such that
	\begin{itemize}
	\item [$i)$] $J_\alpha(u) \geq \beta$ for all $u \in X$ such that $\|u\| = \rho$;
	\item [$ii)$] $J_\alpha(w_{\bar t}) < 0$ and $\|w_{\bar t}\| > \rho$.
	\end{itemize}
	\end{proposition}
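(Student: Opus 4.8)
The plan is to verify the two mountain-pass geometric conditions separately: part $i)$ is a routine local analysis near the origin, while part $ii)$ is the heart of the matter and rests on a careful bookkeeping of how every term of $J_\a$ rescales under $w\mapsto w_t$.

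For part $i)$, I would start from the decomposition $V_0=V_1-V_2$ with $V_1,V_2\ge 0$, so that
$-\frac{q}{4}V_0(u)=-\frac{q}{4}V_1(u)+\frac{q}{4}V_2(u)\ge-\frac{q}{4}V_1(u)$. Combining the estimate \eqref{ineq:V1}, the inequalities $\|u\|_2,\|u\|_*\le\|u\|$, and the continuous embedding \eqref{embeddingX}, I obtain
\[
J_\a(u)\ge\frac12\|u\|^2-\frac{qC_\a}{4\pi}\|u\|^4-\frac{C}{p}\|u\|^p=\|u\|^2\Big(\frac12-\frac{qC_\a}{4\pi}\|u\|^2-\frac{C}{p}\|u\|^{p-2}\Big).
\]
Since $p>2$, both correction terms vanish as $\|u\|\to 0$, so for $\rho>0$ small enough the parenthesis exceeds $\tfrac14$ on the sphere $\|u\|=\rho$, which gives $i)$ with $\beta=\tfrac14\rho^2$.

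The core is part $ii)$. I would first record the exact scaling of each ingredient of $J_\a$ under $w_t(\cdot)=t^r w(\cdot/t)$: the Dirichlet term scales like $t^{2r}$, the $L^2$-mass like $t^{2r+2}$, the weighted mass $\ird|x|^\a w_t^2$ like $t^{2r+2+\a}$, and the local term $\|w_t\|_p^p$ like $t^{rp+2}$. The crucial computation, generalizing \eqref{eq:rescaling} by the extra factor $t^{4r}$, is
\[
V_0(w_t)=t^{4r+4}V_0(w)+\frac{t^{4r+4}\log t}{2\pi}\Big(\ird w^2\,dx\Big)^2,
\]
so that at the leading order $t^{4r+4}$ the coupling contributes $t^{4r+4}\big(-\frac{q}{4}V_0(w)-\frac{q\log t}{8\pi}(\ird w^2)^2\big)$, whose bracket tends to $-\infty$ as $t\to+\infty$ thanks to the $\log t$ factor. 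This is exactly the step that absorbs the indefinite sign of $V_0(w)$.

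To conclude, I would compare exponents. The hypothesis $r>\frac{\a-2}{2}$ is equivalent to $2r+2+\a<4r+4$, which guarantees that all the positive contributions — whose largest exponent is $2r+2+\a$ — are of strictly lower order than the leading negative term $t^{4r+4}\log t$; the local term $\|w_t\|_p^p$ merely adds a further negative contribution, so its exponent need not be tracked. Hence $J_\a(w_t)\to-\infty$, while $\|w_t\|^2=t^{2r}\|\n w\|_2^2+t^{2r+2}\|w\|_2^2+t^{2r+2+\a}\ird|x|^\a w^2\to+\infty$ as $t\to+\infty$, so a sufficiently large $\bar t$ secures both $J_\a(w_{\bar t})<0$ and $\|w_{\bar t}\|>\rho$. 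The main obstacle, and the very reason for introducing the weight $t^r$ with $r>\frac{\a-2}{2}$, is to force the self-interaction term to dominate the coercive weighted-mass term $t^{2r+2+\a}$; the secondary difficulty of the sign-indefinite $V_0(w)$ is resolved by the logarithmic gain in the scaling identity.
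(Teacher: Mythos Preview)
Your proof is correct and follows essentially the same route as the paper's own argument: part $i)$ via $V_0=V_1-V_2$, dropping the nonnegative $V_2$, applying \eqref{ineq:V1} and the embedding \eqref{embeddingX}; part $ii)$ via the exact scaling computation of each term in $J_\a(w_t)$ together with the identity $V_0(w_t)=t^{4r+4}V_0(w)+\frac{t^{4r+4}\log t}{2\pi}\|w\|_2^4$. Your write-up is in fact more explicit than the paper's, which simply records the scaled expression and states ``by the choice of $r$, $J_\a(w_t)\to-\infty$''; your observation that $r>\frac{\a-2}{2}$ is equivalent to $2r+2+\a<4r+4$, so that the logarithmically-enhanced negative term dominates every positive contribution, is exactly the reasoning implicit there.
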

	\begin{proof}
	Note that, for any $u \in X$, from \eqref{embeddingX} and \eqref{ineq:V1}
	\begin{eqnarray*}
	J_\alpha(u) & = & \frac{1}{2}\|u\|^2 - \frac{1}{4}V_1(u) + \frac{1}{4}V_2(u) - \frac{1}{p}\|u\|_p^p\\
	& \geq &  \frac{1}{2}\|u\|^2 - \frac{C_\alpha}{4\pi}\|u\|_2^2\|u\|_*^2 - \frac{C}{p}\|u\|_p^p\\
	& \geq &  \frac{1}{2}\|u\|^2 - \frac{C_\alpha}{4\pi}\|u\|^4 - \frac{C}{p}\|u\|_p^p\\
	& = & \|u\|^2\left( \frac{1}{2} - \frac{C_\alpha}{4\pi}\|u\|^2 - \frac{C}{p}\|u\|^{p-2}\right)\\
	& \geq & \rho^2\left( \frac{1}{2} - \frac{C_\alpha}{4\pi}\rho^2 - \frac{C}{p}\rho^{p-2}\right)\\
	& =: & \beta,
	\end{eqnarray*}
	where $\rho > 0$ is such that $\frac{1}{2} - \frac{C_\alpha}{4\pi}\rho^2 - \frac{C}{p}\rho^{p-2} > 0$. This proves $i)$.
	
	As to $ii)$, we compute
	\begin{eqnarray*}
	J_\alpha(w_t) & = & \frac{t^{2r}}{2}\int_{\mathbb{R}^2}|\nabla w|^2 dx + \frac{t^{2r + 2}}{2}\int_{\mathbb{R}^2}\left(1 + t^\alpha |x|^\alpha\right)w^2 dx \\
	& & - \frac{t^{4r+4} \mbox{log}\, t}{8\pi}\|w\|_2^4 - \frac{t^{4r+4}}{4}V_0(w) - \frac{t^{pr+2}}{p}\|w\|_p^p.\\
	\end{eqnarray*}
	Then, by the choice of $r $, it follows that $J_\alpha(w_t) \to -\infty$ as $t \to +\infty$. Moreover, since $\|w_t\| \to +\infty$ as $t \to +\infty$, we can choose $\bar t$ such that such that $J_\alpha(w_{\bar t}) < 0$ and $\|w_{\bar t}\| > \rho$, which proves $ii)$.
	
	\end{proof}

	Now take $k\in \N$, $k\ge 1$ and consider the following  constraint introduced by Du and Weth in \cite{DuWeth} (see $(iii)$ in Example 1.1.):
		\begin{equation*}
			X_k:=\{u\in X;{\bf A^j}\star u =u, \, \forall j=1, 2, \ldots, 2k\}
		\end{equation*}
	where the operator ${\bf A^j\star}:X\to X$ is such that 
	$$u\in X\mapsto (-1)^j(u\circ {\bf A^j})$$
	and ${\bf A^j}:\RD\to\RD$ is such that for any $x\in\RD:$ ${\bf A^j}(x)=A^j\cdot x$ with
	$$  A^j=
\begin{pmatrix}
	\cos  \frac{j\pi}k& -\sin \frac{j\pi}k\\ 
	\sin \frac{j\pi}k & \cos  \frac{j\pi}k  \\ 
\end{pmatrix}.
		$$
	Of course, apart from the null functions, all functions in $X_k$ are sign changing.  Moreover	by arguments based on Palais symmetric criticality principle, it can be proved that $X_k$ is a natural constraint for $J_\a$. In what follows we are interested in finding a critical point of $J_\a|_{X_k}$ at the mountain pass level.
	
	First observe that, since for every $w\in X_k\setminus\{0\}$ and $t>0$ we have $w_t\in X_k$, the set 
	$$
	\Gamma_k = \left\{ \gamma \in C([0,1],X_k); \, \gamma(0) = 0 \,\, \mbox{and} \,\, J_\a(\gamma(1))<0, \|\gamma(1)\|>\rho\right\}
	$$

	is nonempty by $ii)$ of Proposition \ref{pr:mp}. Then, by $i)$ of Proposition \ref{pr:mp}, the number
		$$
	c_{\alpha,k} = \inf_{\gamma \in \Gamma_k} \max_{0 \leq t \leq 1}J_\alpha(\gamma(t))
	$$
	is well defined and larger than $\beta$.

	\begin{proposition}
	\label{propMPT}
	There exists $(u_n)_{n \in \mathbb{N}} \subset X_k$ such that, as $n \to +\infty$,
	$$
	J_\alpha(u_n) \to c_{\alpha,k},
	$$
	$$
	J_\alpha'(u_n) \to 0
	$$
	and
	\begin{equation}\label{poho}
		P_\a(u_n) \to 0,
	\end{equation}
	where for an arbitrary  $r\in\R$, 
	\begin{eqnarray*}
	P_\a(u) & = & r\|\nabla u\|_2^2 + (r+1)\|u\|_2^2 + \left(r + 1+ \frac{\alpha}{2}\right) \int_{\mathbb{R}^2}|x|^\alpha u^2 dx\\
	& & - \frac{q}{8\pi}\|u\|_2^4 - q(r+1)V_0(u) - \frac{pr+2}{p}\|u\|_p^p.\nonumber
	\end{eqnarray*}
	\end{proposition}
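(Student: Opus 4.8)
The plan is to construct a Palais--Smale sequence for $J_\alpha|_{X_k}$ at the level $c_{\alpha,k}$ that \emph{additionally} satisfies the Pohozaev-type relation $P_\alpha(u_n)\to 0$. The natural device for this is the monotonicity trick of Jeanjean, combined with the scaling $w_t(\cdot)=t^r w(\cdot/t)$ already featured in Proposition~\ref{pr:mp}. First I would introduce a one-parameter family of functionals
\begin{equation*}
	J_{\alpha,\lambda}(u)=\frac12\|\nabla u\|_2^2+\frac12\|u\|_*^2-\frac{q}{4}V_0(u)-\frac{\lambda}{p}\|u\|_p^p,\qquad \lambda\in[\tfrac12,1],
\end{equation*}
so that $J_{\alpha,1}=J_\alpha$, and verify that each $J_{\alpha,\lambda}$ retains the mountain-pass geometry of Proposition~\ref{pr:mp} uniformly in $\lambda$: item $i)$ holds because the quadratic part is untouched, and item $ii)$ holds because along $w_t$ the dominant divergent term is the positive $t^{4r+4}\log t$ contribution from $V_0$, which is independent of $\lambda$, so a single $\bar t$ works for all $\lambda$ in the compact interval. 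This gives mountain-pass levels $c_{\alpha,k}(\lambda)$ over the same path family $\Gamma_k$, nonincreasing in $\lambda$, hence differentiable for almost every $\lambda$.

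For each $\lambda$ at which $\lambda\mapsto c_{\alpha,k}(\lambda)$ is differentiable, Jeanjean's theorem yields a \emph{bounded} Palais--Smale sequence for $J_{\alpha,\lambda}$ at level $c_{\alpha,k}(\lambda)$; boundedness is exactly the payoff of the monotonicity trick and is the reason one perturbs the functional at all. I would then extract, along a sequence $\lambda_m\uparrow 1$, critical points (or near-critical bounded sequences) $u_{\lambda_m}$ of $J_{\alpha,\lambda_m}$, and show that the family $(u_{\lambda_m})$ is itself bounded in $X_k$ with $J_\alpha(u_{\lambda_m})\to c_{\alpha,k}$ and $J_\alpha'(u_{\lambda_m})\to 0$, using $c_{\alpha,k}(\lambda)\to c_{\alpha,k}(1)=c_{\alpha,k}$ by continuity and the fact that $|\lambda_m-1|\,\|u_{\lambda_m}\|_p^p\to 0$. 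Relabelling gives the desired sequence $(u_n)\subset X_k$ satisfying the first two conclusions.

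The Pohozaev relation is produced by differentiating $J_\alpha$ along the scaling orbit. For a critical point $u$ of the $\lambda$-functional one has $\frac{d}{dt}J_{\alpha,\lambda}(u_t)\big|_{t=1}=\langle J_{\alpha,\lambda}'(u),\,\tfrac{d}{dt}u_t|_{t=1}\rangle=0$, and a direct computation of this $t$-derivative---using the scaling identity \eqref{eq:rescaling} for $V_0$ and the explicit $t$-powers appearing in $J_\alpha(w_t)$ in the proof of Proposition~\ref{pr:mp}---reproduces exactly the expression $P_\alpha(u)$ with the arbitrary parameter $r$ coming from the scaling exponent. Since the $u_{\lambda_m}$ are only \emph{approximate} critical points of $J_\alpha$, this identity holds up to an error controlled by $\|J_\alpha'(u_{\lambda_m})\|$ times the norm of the generator $\tfrac{d}{dt}u_t|_{t=1}$; boundedness of $(u_{\lambda_m})$ in $X_k$ keeps that generator bounded, so $P_\alpha(u_n)\to 0$ follows.

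\textbf{The main obstacle} I anticipate is establishing that the Pohozaev generator $\tfrac{d}{dt}u_t|_{t=1}$ lies in $X_k$ and has norm bounded in terms of $\|u_n\|$, because the scaling $u_t=t^r u(\cdot/t)$ interacts with the weight $|x|^\alpha$ in $\|\cdot\|_*$: differentiating brings down an extra factor of $|x|^\alpha$, so one must check that $\int |x|^\alpha u_n^2\,dx$ and $\int |x|^\alpha|\nabla u_n|^2$-type quantities stay controlled along the sequence. The symmetry condition ${\bf A^j}\star u=u$ is preserved by the radial scaling since ${\bf A^j}$ is a rotation commuting with $x\mapsto x/t$, so membership in $X_k$ is automatic; the quantitative bound on the generator is the delicate point and is where the boundedness furnished by the monotonicity trick is genuinely used.
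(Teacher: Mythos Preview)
The paper does not give a self-contained argument here; it simply refers to \cite[Lemma~3.5]{A}. The technique used there is the \emph{augmented functional} device: one introduces
\[
\widetilde J_\alpha(\theta,u)=J_\alpha\bigl(e^{r\theta}u(e^{-\theta}\,\cdot)\bigr),\qquad (\theta,u)\in\mathbb{R}\times X_k,
\]
checks that $\widetilde J_\alpha$ inherits the mountain-pass geometry of Proposition~\ref{pr:mp} with the same minimax level $c_{\alpha,k}$, and applies the mountain-pass theorem directly on the product space. The resulting Palais--Smale sequence $(\theta_n,v_n)$ is rescaled to $u_n:=e^{r\theta_n}v_n(e^{-\theta_n}\,\cdot)$; the $u$-component of $\widetilde J_\alpha'(\theta_n,v_n)\to 0$ gives $J_\alpha'(u_n)\to 0$, while the $\theta$-component is exactly $P_\alpha(u_n)\to 0$ by the computation you sketch in your third paragraph. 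No boundedness of $(u_n)$ is required at this stage---that is postponed to Proposition~\ref{pro:bound}.

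Your route via the monotonicity trick is a genuine alternative, and the ingredients you list (uniform geometry in $\lambda$, monotonicity of $c_{\alpha,k}(\lambda)$, exact Pohozaev identity for the critical points of $J_{\alpha,\lambda}$) are the right ones. There is, however, a gap you pass over: after extracting critical points $u_{\lambda_m}$ you \emph{assert} that the family $(u_{\lambda_m})_m$ is bounded in $X_k$, but give no argument. The only available way to obtain this bound is to combine the functional value, the Euler--Lagrange equation, and the Pohozaev identity for each $u_{\lambda_m}$---which is precisely the computation carried out in Proposition~\ref{pro:bound}, including the split into the cases $p\ge 4$ and $2<p<3$. So your proof of the present proposition would have to absorb that later argument. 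The augmented-functional approach avoids this entirely by producing the three conclusions \emph{without} any a priori bound on $(u_n)$, keeping the two propositions logically separate, which is why the paper is organised this way.

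A smaller point: the obstacle you flag---that the scaling generator $\tfrac{d}{dt}u_t\big|_{t=1}=ru-x\cdot\nabla u$ may fail to lie in $X$---is real, but it is not cured by the boundedness coming from the monotonicity trick (boundedness of $u$ in $X$ does not put $x\cdot\nabla u$ in $X$). For actual critical points the Pohozaev identity is obtained via elliptic regularity and a cutoff argument rather than by testing against this formal generator; in the augmented-functional approach the issue never arises because one differentiates in the auxiliary variable $\theta$ of a $C^1$ functional on $\mathbb{R}\times X_k$ from the outset.
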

	The proof follows by the same arguments as \cite[Lemma 3.5]{A}.
	
	Note that, by the last result, there exists $(u_n)_{n \in \mathbb{N}} \subset X_k$ a Palais-Smale sequence for $J_\alpha$ at the level $c_{\alpha,k}$. On the other hand, in the next result, we prove that such a sequence is bounded in $X_k$.

	\begin{proposition}
	If $p\ge 4$, then any Palais-Smale sequence for $J_\a$ is bounded.\\
	If $2 < p < 3$, then the sequence $(u_n)_{n \in \mathbb{N}}$ coming from Propositon \ref{propMPT} is bounded in $X_k$.
	\label{pro:bound}
	\end{proposition}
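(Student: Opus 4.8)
The plan is to feed the three asymptotic facts provided by Proposition~\ref{propMPT}—namely $J_\alpha(u_n)\to c_{\alpha,k}$, $J'_\alpha(u_n)\to 0$ and $P_\alpha(u_n)\to 0$—into a small system of scalar relations and to close it with the planar Gagliardo--Nirenberg inequality. Throughout I abbreviate $a_n=\|\nabla u_n\|_2^2$, $b_n=\|u_n\|_2^2$, $d_n=\int_{\mathbb{R}^2}|x|^\alpha u_n^2\,dx$, $\ell_n=\|u_n\|_p^p$, $V_n=V_0(u_n)$, $N_n=\|u_n\|^2=a_n+b_n+d_n$ (the first four being nonnegative) and write $c:=c_{\alpha,k}$. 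Since $\langle V_0'(u),u\rangle=4V_0(u)$, testing $J'_\alpha(u_n)$ against $u_n$ gives $N_n-qV_n-\ell_n=o(N_n^{1/2})$, while $J_\alpha(u_n)=c+o(1)$ reads $\tfrac12 N_n-\tfrac q4 V_n-\tfrac1p\ell_n=c+o(1)$. Eliminating $V_n$ between them yields the basic identity
\begin{equation*}
N_n+\Big(1-\tfrac4p\Big)\ell_n=4c+o\big(N_n^{1/2}\big).
\end{equation*}

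For $p\ge 4$ this alone settles the (stronger) statement: since $1-\tfrac4p\ge 0$ and $\ell_n\ge 0$, one reads off $N_n\le 4c+o(N_n^{1/2})$, so $\|u_n\|$ is bounded. As no use is made of the Pohozaev relation, this conclusion holds for \emph{every} Palais--Smale sequence, exactly as claimed.

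For $2<p<3$ I would argue by contradiction, assuming $N_n\to+\infty$. Because now $1-\tfrac4p<0$, the basic identity forces $\ell_n=\tfrac{p}{4-p}N_n\,(1+o(1))\to+\infty$. The heart of the matter is to bound $b_n=\|u_n\|_2^2$ from both sides. For the upper bound I exploit the Pohozaev datum: collecting the coefficient of $r$ one sees $P_\alpha(u)=r\langle J'_\alpha(u),u\rangle+Q_\alpha(u)$, where
\begin{equation*}
Q_\alpha(u)=\|u\|_2^2+\Big(1+\tfrac\alpha2\Big)\int_{\mathbb{R}^2}|x|^\alpha u^2\,dx-\frac{q}{8\pi}\|u\|_2^4-qV_0(u)-\frac2p\|u\|_p^p,
\end{equation*}
so that $Q_\alpha(u_n)=P_\alpha(u_n)-r\langle J'_\alpha(u_n),u_n\rangle=o(N_n^{1/2})$. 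Inserting $qV_n=N_n-\ell_n+o(N_n^{1/2})$ and using $N_n=a_n+b_n+d_n$, the quartic term isolates as
\begin{equation*}
\frac{q}{8\pi}b_n^2=-a_n+\frac\alpha2 d_n+\Big(1-\tfrac2p\Big)\ell_n+o\big(N_n^{1/2}\big);
\end{equation*}
since $a_n\ge0$, $d_n\le N_n$ and $\ell_n=O(N_n)$, the right-hand side is $O(N_n)$, whence $b_n\le C\,N_n^{1/2}$. For the lower bound I use the planar inequality $\|u\|_p^p\le C\|\nabla u\|_2^{p-2}\|u\|_2^2$, which gives $\ell_n\le C\,a_n^{(p-2)/2}b_n\le C\,N_n^{(p-2)/2}b_n$; combined with $\ell_n\sim\tfrac{p}{4-p}N_n$ this yields $b_n\ge c\,N_n^{(4-p)/2}$.

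Comparing the two bounds gives $N_n^{(4-p)/2}\lesssim N_n^{1/2}$, i.e. $N_n^{(3-p)/2}\lesssim 1$, which for $p<3$ contradicts $N_n\to+\infty$ and proves the boundedness of $(u_n)$. The main obstacle is precisely this confrontation between the Gagliardo--Nirenberg lower bound and the Pohozaev upper bound on $\|u_n\|_2^2$: the two are incompatible exactly when $(4-p)/2>1/2$, that is $p<3$, which is why the range $3\le p<4$ resists the method, while at the other extreme $p\ge4$ causes no difficulty at all.
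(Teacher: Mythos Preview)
Your argument is correct in both regimes. For $p\ge 4$ your elimination of $V_n$ between $J_\alpha(u_n)$ and $\langle J_\alpha'(u_n),u_n\rangle$ is exactly the paper's combination $J_\alpha(u_n)-\tfrac14\langle J_\alpha'(u_n),u_n\rangle$, just written differently; nothing to add there.

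For $2<p<3$ the ingredients are the same as in the paper (the three asymptotic relations from Proposition~\ref{propMPT} together with the planar Gagliardo--Nirenberg inequality), but the organization is genuinely different. The paper assumes $\|\nabla u_n\|_2\to\infty$, introduces the rescaled sequence $v_n(x)=t_n^2 u_n(t_n x)$ with $t_n=\|\nabla u_n\|_2^{-1/2}$, and then combines $J_\alpha-\tfrac{1}{4(r+1)}P_\alpha$ for some $r>\tfrac{\alpha-2}{2}$ with $P_\alpha$ at $r=0$ to force $\|v_n\|_2\le Ct_n^{3-p}$ and ultimately $V_0(v_n)=o(1)$, reaching a contradiction via the Nehari relation; boundedness of $\|u_n\|_2$ and of the weighted $L^2$ piece are then handled in a second step. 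You instead assume directly that the full norm $N_n\to\infty$, observe that $P_\alpha=r\langle J_\alpha',\cdot\rangle+Q_\alpha$ with $Q_\alpha$ independent of $r$, and confront an \emph{upper} bound $b_n\lesssim N_n^{1/2}$ (from the quartic term in $Q_\alpha$) with a \emph{lower} bound $b_n\gtrsim N_n^{(4-p)/2}$ (from Gagliardo--Nirenberg and the basic identity). This is shorter, avoids the rescaling machinery, treats all pieces of the norm at once, and makes the threshold $p=3$ (i.e.\ $(4-p)/2=1/2$) visible in a single line. The paper's route, on the other hand, is closer to the Du--Weth template and extracts more information along the way (e.g.\ the smallness of $V_0(v_n)$), which is not needed here but could be useful in related problems.
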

	\begin{proof}
	First of all, consider the case $p \ge 4$. Assume that $(v_n)_n$ in $X$ is a Palais-Smale sequence for $J_\a$. Then, since $J_\a'(v_n)\to 0$, we have
	
	\begin{align}
	\label{eqNehari}
	o_n(1)\|v_n\|&=\langle J'_\a(v_n),v_n\rangle\nonumber\\
	&=\|\nabla v_n\|_2^2 + \|v_n\|_2^2 + \int_{\mathbb{R}^N}|x|^\alpha v_n^2 dx - qV_0(v_n) - \|v_n\|_p^p \nonumber\\
	& :=N_\a(v_n).
	\end{align}
	Moreover, since $J_\a(v_n)$ is bounded, then there exists $M>0$ such that
	\begin{eqnarray*}
	M+o_n(1)\|v_n\| & \ge & J_\alpha(v_n) - \frac{1}{4}N_\a(v_n)\\
	& = & \frac{1}{4}\|\nabla v_n\|_2^2 + \frac{1}{4}\int_{\mathbb{R}^2}(1+|x|^\alpha) v_n^2 dx +\left( \frac{1}4-\frac 1p\right)\|v_n\|_p^p.
	\end{eqnarray*}
	This, in turn, implies that $(v_n)_n$ is bounded in $X_k$. 
	
	Now assume $2 < p < 3$ and consider $(u_n)_n$ as in Proposition \ref{propMPT}. Applying \eqref{poho} we obtain
	
	\begin{eqnarray}\label{eq:B1} 
		c_{\alpha,k} + o_n(1) & = & J_\alpha(u_n) - \frac{1}{4(r+1)}P_\alpha(u_n)\\ 
		\nonumber & = & \left(\frac{r+2}{4(r+1)}\right)\|\nabla u_n\|_2^2 + \frac{1}{4}\|u_n\|_2^2\\
		\nonumber & &  + \left(\frac{2(r+1)-\alpha}{8(r+1)}\right)\int_{\mathbb{R}^2}|x|^\alpha u_n^2 dx\\
		\nonumber&& + \left(\frac{(p-4)r-2}{4p(r+1)}\right)\|u_n\|_p^p + \frac{q}{32\pi(r+1)}\|u_n\|_2^4.
	\end{eqnarray}
	If we take $r > \frac{\alpha-2}{2}$, there exist $C_i > 0$, $i \in \{1,...,5\}$, such that
	\begin{equation}
	\label{eq:B2}
	c_{\alpha,k} + o_n(1) = C_1\| \nabla u_n\|_2^2 + C_2\|u_n\|_2^2 - C_3\|u_n\|_p^p + C_4\int_{\mathbb{R}^2}|x|^\alpha u_n^2 dx + C_5\|u_n\|_2^4.
	\end{equation}
	
	Suppose by contradiction that, up to a subsequence, $\|\nabla u_n\|_2 \to +\infty$, as $n \to +\infty$. Let $t_n = \|\nabla u_n\|_2^{-\frac{1}{2}}$ and note that $\lim_{n \to +\infty}	t_n = 0$. Define
	$$
	v_n(x)=t_n^2 u_n(t_n x),
	$$
	in such a way that, for all $1 \leq q < +\infty$,
	\begin{equation}
	\|\nabla v_n\|_2^2 = t_n^4\|\nabla u_n\|_2^2 = 1, \quad \|v_n\|_q^q = t_n^{2q-2}\|u_n\|_q^q.
	\label{eq:B3}
	\end{equation}

	Then, multiplying \eqref{eq:B2} by $t_n^4$, it follows that
	\begin{eqnarray}	\label{eq:B6}
	 c_{\alpha,k} t_n^4 + o(t_n^4) & = & C_1t_n^4 \|\nabla u_n\|_2^2 + C_2t_n^4\|u_n\|_2^2\\
	& & - C_3 t_n^4\|u_n\|_p^p + C_4 t_n^4\int_{\mathbb{R}^2}|x|^\alpha u_n^2 dx + C_5 t_n^4\|u_n\|_2^4.\nonumber
	\end{eqnarray}
	Moreover, by Gagliardo-Nirenberg's inequality,
	\begin{equation}
	\|u_n\|_p^p \leq C\|u_n\|_2^2 \|\nabla u_n\|_2^{p-2} = C t_n^{4-2p}\|u_n\|_2^2.
	\label{eq:B5}
	\end{equation}
	Then,
	\begin{equation}
	t_n^4\|u_n\|_p^p \leq C t_n^{6-2p}\|v_n\|_2^2.
	\label{eq:B7}
	\end{equation}
	By \eqref{eq:B6} and \eqref{eq:B7}, it follows that
	\begin{eqnarray*}
	\overline{C}t_n^4 & \geq & c_\alpha t_n^4 + o(t_n^4)\\
	& \geq & C_5\|v_n\|_2^4 - C t_n^{6-2p}\|v_n\|_2^2.
	\nonumber
	\end{eqnarray*}

	Hence, we can see that there exists $\tilde C > 0$ such that, for $n$ sufficiently large,
	\begin{equation}
	\label{eq:B9}
	{\|v_n\|_2 \leq \tilde Ct_n^{3-p}}.
	\end{equation}
	Moreover, from \eqref{eq:B6}, \eqref{eq:B7} and \eqref{eq:B9}, it follows that
	\begin{equation}
	\label{eq:B10}
	t_n^4\int_{\mathbb{R}^2}|x|^\alpha u_n^2 dx \leq c_{\alpha,k} t_n^4 + o(t_n^4) + C_3t_n^4\|u_n\|_p^p = o_n(1).
	\end{equation}
		
	Moreover, by \eqref{eq:B9},
	\begin{equation}
	t_n^4V_0(u_n) = V_0(v_n) + \|v_n\|_2^4 \log (t_n) = V_0(v_n) + o_n(1).
	\label{eq:B11}
	\end{equation}
	From Proposition \ref{propMPT} applying \eqref{poho} for $r=0$, it follows that
	\begin{eqnarray}	\label{eq:B12} 
	o(t_n^4) & = & t_n^4 P_\a(u_n) \\
	\nonumber & = & t_n^4\|u_n\|_2^2 + \left(\frac{2+\alpha}{2}\right)t_n^4\int_{\mathbb{R}^2}|x|^\alpha u_n^2 dx - q t_n^4 V_0(u_n)\\
	\nonumber & & - \frac{q}{8\pi}t_n^4 \|u_n\|_2^4 - \frac{2}{p}t_n^4\|u_n\|_p^p\\
	\nonumber & = & t_n^2 \|v_n\|_2^2 + \left(\frac{2+\alpha}{2}\right)t_n^4\int_{\mathbb{R}^2}|x|^\alpha u_n^2 dx - \frac{q}{8\pi}\|v_n\|_2^4\\
	 & & - \frac{2}{p}t_n^4\|u_n\|_p^p - qV_0(v_n) + o_n(1). \nonumber
	\end{eqnarray}
	From \eqref{eq:B7} and \eqref{eq:B9}, it follows that
	\begin{equation}
	t_n^4\|u_n\|_p^p \leq C\|v_n\|_2^2 t_n^{6-2p} = o_n(1).
	\label{eq:B13}
	\end{equation}
	Hence, \eqref{eq:B10}, \eqref{eq:B12} and \eqref{eq:B13} imply that
	\begin{equation}
	V_0(v_n) = o_n(1).
	\label{eq:B15}
	\end{equation}
	
	Then, multiplying \eqref{eqNehari} by $t_n^4$ and using \eqref{eq:B3}, \eqref{eq:B10}, \eqref{eq:B11}, \eqref{eq:B13} and \eqref{eq:B15}, we have that
	\begin{eqnarray*}
	o(t_n^4)\|u_n\| & = & t_n^4\|\nabla u_n\|_2^2 + t_n^4\|u_n\|_2^2 + t_n^4\int_{\mathbb{R}^2}|x|^\alpha u_n^2 dx\\
	& & -qt_n^4V_0(u_n) - t_n^4\|u_n\|_p^p\\
	& = & 1 + o_n(1),
	\end{eqnarray*}
	which is a contradiction since, by \eqref{eq:B3} and \eqref{eq:B10},
		\begin{align*}
			t_n^4\|u_n\| = \left(t_n^4+ t_n^8\int_{\mathbb{R}^2}|x|^\alpha u_n^2 dx\right)^{\frac 12}=o_n(1).
		\end{align*}
	Hence, $(\nabla u_n)_n$ is bounded in $L^2(\mathbb{R}^2)$.
	
	Moreover, by \eqref{eq:B2} and \eqref{eq:B5} and taking into account that there exists $C_6 > 0$ such that $\|\nabla u_n\|_2^2 \leq C_6$ for all $n \in \mathbb{N}$, we have
	$$
	C_5\|u_n\|_2^4 + (C_2-C C_3C_6^{p-2})\|u_n\|_2^2 + C_4\int_{\mathbb{R}^2}|x|^\alpha u_n^2 dx \leq c_{\alpha,k} + o_n(1),
	$$
	The last estimate, in turn, implies that $(u_n)_n$ is bounded in $L^2(\mathbb{R}^N)$ and also in $X_k$.
	
	\end{proof}

	\begin{proof}[Proof of Theorem \ref{teo2}]
	Take $(u_n)_n$ as in Proposition \ref{pro:bound}. It follows that there exists $u_k \in X_k$, such that
	$$
	u_n \rightharpoonup u_k, \quad \mbox{in $X_k$}.
	$$
	From the compact embeddings $X_k \hookrightarrow L^r(\mathbb{R}^2)$ for $r \geq 2$, we have that
	\begin{equation}
	u_n \to u_k, \quad \mbox{in $L^r(\mathbb{R}^2)$, for $r \geq 2$}.
	\label{eq:unuLr}
	\end{equation}
	Hence, the same arguments as in Lemma \ref{le:weaklycontinuous} and \eqref{eq:unuLr} imply that $u_k$ is a weak solution to \eqref{eq:e2e}. Moreover, since $\langle J_\alpha'(u_n),u_n\rangle =o_n(1)$ and $\langle J_\alpha'(u),u\rangle = 0$, by Lemma \ref{le:weaklycontinuous} it follows that
	$$
	\|u_n\|^2 = \frac q4 \langle V_0'(u_n), u_n \rangle+ \|u_n\|_p^p + o_n(1) = \frac q4\langle V_0'(u_k), u_k\rangle + \|u_k\|_p^p + o_n(1) = \|u_k\|^2 + o_n(1).
	$$
	Hence, from the last equality it follows that
	$$
	u_n \to u_k \quad \mbox{in $X$}
	$$
	and then, 
	$$
	J_\alpha(u_k) = c_{\alpha,k}.
	$$
	Since $c_{\alpha,k} > 0$, we have that $u_k\in X_k$ is a nontrivial sign-changing solution of \eqref{eq:e2e} at the mountain pass minimax level.\\
	Now, in order to prove multiplicity, we can proceed exactly as in \cite[Proof of Corollary 1.4.]{DuWeth} to obtain a sequence of solutions $(u_{3^h})_h$ such that, for all $h\ge 1$
		\begin{itemize}
			\item $u_{3^h}\in X_{3^h}$,
			\item $J_\a(u_{3^h})=c_{\a,3^h}$,
		\end{itemize}
	being the sequence $(c_{\a,3^h})_h$ of the mountain pass levels of  $J_\a$ on $X_{3^h}$ non-decreasing and unbounded from above.
	\end{proof}
	{\bf Acknowledgments:} Antonio Azzollini is supported by PRIN 2017JPCAPN {\em Qualitative and quantitative aspects of nonlinear PDEs}. \\
	Marcos T. O. Pimenta is partially supported by FAPESP 2021/04158-4 and CNPq 303788/2018-6.

\end{document}